\newcommand{\norm}[1]{\left\|#1\right\|}
\newcommand{\scalar}[2]{\langle{ #1},{#2} \rangle}
\newcommand{\set}[1]{\left\{#1\right\}}
\newcommand{\lr}[1]{\left(#1\right)}
\newcommand{\xp}{x^\dag}
\newcommand{\Bj}[1]{B_{#1}}
\newcommand{\asta}{A^\ast A}
\newcommand{\domain}{\mathcal D}
\newcommand{\range}{\mathcal R}
\newcommand{\R}{\mathbb{R}}
\newtheorem{theo}{Theorem}
\newtheorem{lem}{Lemma}
\newtheorem{prop}{Proposition}
\newtheorem{cor}{Corollary}[section]
\newtheorem{assumption}{Assumption}
\theoremstyle{remark}
\newtheorem{rem}{Remark}
\newtheorem{example}{Example}
\newcommand{\xad}{{x_{\alpha}^\delta}}
\newcommand{\yd}{{y^\delta}}
\newcommand{\rad}{{r_{\alpha}^\delta}}
\newcommand{\ra}{{r_{\alpha}}}
\newcommand{\ga}{{g_{\alpha}}}
\newcommand{\xd}{{x^\dagger}}
\newcommand{\xa}{{x_\alpha}}
\newcommand{\Tik}{T_{\alpha}}
\newcommand{\reg}{J}
\newcommand{\pJ}{\partial \reg}
\renewcommand{\mod}{\Phi}
\newcommand{\modi}{\Psi}
\newcommand{\half}{\frac{1}{2}}
\DeclareMathOperator*{\argmin}{argmin}
\title[Penalty-based smoothness conditions]{Penalty-based smoothness
  conditions in convex variational regularization}
\author{Bernd Hofmann}
\address[Bernd Hofmann]{Faculty of Mathematics, Chemnitz University of Technology,   09107 Chemnitz,  Germany}
\email{bernd.hofmann@mathematik.tu-chemnitz.de}
\author{Stefan Kindermann}
\address[Stefan Kindermann]{Industrial Mathematics Institute, Johannes Kepler University Linz, Alternbergergstraße 69, 4040 Linz}
\email{kindermann@indmath.uni-linz.ac.at}
\author{Peter Math\'e}
\address[Peter Math\'e]{Weierstrass Institute, Mohrenstraße 39, 10117 Berlin, Germany}
\email{peter.mathe@wias-berlin.de}
\date{\today}
\begin{document}
\begin{abstract}
The authors study Tikhonov regularization of linear ill-posed problems with a general convex penalty defined on a Banach space. It is well known that the error analysis requires smoothness assumptions.
Here such assumptions are given in form of inequalities involving only the family of noise-free minimizers along the regularization parameter  and the (unknown) penalty-minimizing solution.
These inequalities control, respectively, the defect of the penalty, or likewise, the defect of the whole Tikhonov functional.
The main results provide error bounds for a Bregman distance, which split into two summands: the first smoothness-dependent term does not depend on the noise level, whereas the second term includes the noise level.
This resembles the situation of standard quadratic Tikhonov regularization Hilbert spaces.
It is shown that variational inequalities, as these were studied recently, imply the validity of the assumptions made here. Several examples highlight the results in specific applications.
 \end{abstract}
\maketitle
\section{Introduction}
As a mathematical model for a linear inverse problem, we consider the ill-posed operator equation
\begin{equation} \label{eq:opeq}
A\,x\,=\,y\,,
\end{equation}
where $A$ is a bounded linear operator from an infinite-dimensional Banach space $X$ to an
infinite-dimensional Hilbert space $H$ such that $\range(A)$, the range of $A$, is a non-closed subset of $H$.
Let $\xd \in X$ denote an exact solution of (\ref{eq:opeq}) with properties to be particularized later.
Unlesss specified otherwise, the norm $\|\cdot\|$ in this study  always refers
to that in $H$. We assume that instead of the exact right-hand side $y \in \range(A)$ only noisy data $\yd \in H$
satisfying
\begin{equation} \label{eq:noise}
\|\yd-y\| \le \delta
\end{equation}
with noise level $\delta \ge 0$ are available. Based on $\yd$ we try to recover $\xd$ in a stable approximate manner
by using variational regularization with general \emph{convex} penalty functionals $\reg$.

Precisely, we are going to analyze convergence conditions for minimizers of the Tikhonov
functional
\begin{equation*}
  T_{\alpha}(x;v) := \frac{1} {2} \norm{A x - v}{}^{2} + \alpha
  \reg(x),\quad x\in X,
\end{equation*}
with regularization parameter $\alpha>0$, for exact right-hand sides $v:=y$ and noisy data $v:=\yd$. In this context, we distinguish regularized solutions
\begin{equation}\label{eq:xa} \xa \in \argmin_{x \in X}  T_{\alpha}(x;y)\end{equation}
and
\begin{equation}\label{eq:xad} \xad \in  \argmin_{x \in X}  T_{\alpha}(x;\yd),\end{equation}
respectively. In case of multiple minimizers we select any family of regularized solutions $\xa$ and $\xad$ for $\alpha>0$. As will be seen from the subsequent results, in particular from the discussion in Remark~\ref{rem:multiple}, the specific choice has no impact on the convergence rate results.


We are interested in estimates of the error between $\xad$ and $\xd$ and in proving corresponding convergence rates. In a Hilbert space $X$, the error
norm is a canonical measure in this context, in particular if the penalty $\reg$ is of norm square type. For Banach spaces $X$ and general convex penalties $\reg$, however, norms are not always appropriate measures, and the study~\cite{BurOsh04} introduced alternatively the Bregman distance
\begin{equation}
\label{eq:bregman}
  \Bj\zeta(z;x) := \reg(x) - \reg(z) - \scalar{ \zeta}{x - z},\quad x\in X, \quad \zeta \in \pJ(z) \subset X^*,
\end{equation}
with some subgradient $\zeta$ from the subdifferential $\pJ(z)$ of $\reg$ at the point $z \in  X$,
as a powerful error measure for regularized solutions of ill-posed problems in Banach spaces, see also, e.g.,~\cite{HoKaPoSc07,Resm05,Scherzer09,ScKaHoKa12}.
We stress the fact that the subgradient $\zeta$ is
taken at the first argument in the Bregman distance, and we recall that the Bregman distance is not symmetric in its arguments. {Therefore, we highlight in~\eqref{eq:bregman} the base point~$z$, by indicating the corresponding subgradient, say~$\zeta$.}

It is a classical result that convergence rates for ill-posed
problems require a regularity condition (abstract smoothness condition)
for $\xd$ as otherwise convergence can be arbitrary slow.

For linear problems in Hilbert space, a classical smoothness condition assumes that~$\xd\in \range\lr{A^{\ast}}$. The corresponding Banach
space assumption instead supposes that there is a
subgradient
$$\xi^{\dag}\in\pJ(\xd) \qquad \mbox{with} \qquad \xi^{\dag} = A^{\ast}w,\;\norm{w}\leq R.$$
For convex $\reg$ the Bregman distance is non-negative, and hence~\eqref{eq:bregman} implies that  for all $x \in X$ the inequality $$
\reg(\xd) - \reg(x) \le \scalar{\xi^{\dag}}{\xd-x}
$$
holds, and then   it is immediate (cf.~\cite[p.~349]{Ki16}) that we have
\begin{equation}
  \label{eq:vi-benchmark}
\reg(\xd) - \reg(x) \leq R \norm{A\xd - Ax} \quad \mbox{for all} \quad  x\in X.
\end{equation}
This  represents a \emph{benchmark variational inequality} as Section~\ref{sec:relations} will reveal.
If otherwise $\xi^{\dag}\not\in\range(A^{\ast})$, then a condition of type (\ref{eq:vi-benchmark}) must fail, but as shown in~\cite[Lemma~3.2]{Fl17}
a variational inequality
\begin{equation}
  \label{eq:vi-Phi}
\reg(\xd) - \reg(x) \leq \mod\lr{\norm{A\xd - Ax}} \quad \mbox{for all} \quad  x\in X
\end{equation}
with a sub-linear \emph{index function}\footnote{Throughout, we call a function $\varphi\colon (0,\infty) \to (0,\infty)$
index function if it is continuous, strictly increasing and obeys the
limit condition $\lim_{t\to +0} \varphi(t) = 0$.}~$\mod$ holds, and
the quotient function $\mod(t)/t$ is strictly decreasing.
Under a more restrictive assumption on~$\mod$ (concavity instead of sub-linearity)
and for a more general setting this condition was introduced as formula (2.11) in \cite{Ki16}, and it was proven that (\ref{eq:vi-Phi}) yields the convergence rates
\[ \Bj{\xi_\alpha^\delta}(\xad;\xd) = \mathcal{O}(\mod(\delta)) \quad
\mbox{as} \quad \delta \to 0\,. \]
In the past years, convergence rates under variational source conditions (cf., e.g.,~\cite{Fl12,Gr10,HoMa12})  were expressed in terms of  the Bregman distance~$B_{\xi^\dagger}(\xd;\xad)$, and hence using the base point~$\xd$.  In this context it is not clear whether a subgradient~$\xi^\dag\in \pJ(\xd)$ exists, for instance if~$\xd$ is not in the interior of~$\mathrm{dom}(\reg):=\{x \in X:\,J(x)<\infty\}$. Taking as base point the minimizer~$\xad$, this cannot happen and the set~$\pJ(\xad)$ is always non-empty (cf., e.g.,~\cite[Lemma~2.2]{Fl17}). This may be seen as an advantage of the present approach, following the original study~ \cite{Ki16}.

Without
further notice we follow the convention from that study: if the
subdifferential $\partial J(\xad)$ is multi-valued, then we take for
$\Bj{\xi_\alpha^\delta}(\xad;\xd)$ a subgradient $\xi_\alpha^\delta$
that satisfies the optimality condition
\begin{equation} \label{eq:left0}
A^*(A\xad-\yd)+\alpha \xi_\alpha^\delta=0.
\end{equation}

A remarkable feature of the error bounds under smoothness
assumptions~(\ref{eq:vi-Phi}) is the splitting of the error, see
also in a more general setting~\cite[Thm.~3.1]{Ki16}, as
\begin{equation}
  \label{eq:err-split}
B_{\xi_{\alpha}^{\delta}}(\xad;\xd) \leq \frac{\delta^{2}}{2\alpha} +
\modi(\alpha) \quad \mbox{for all} \quad  \alpha>0,
\end{equation}
where the function~$\modi$ is related to~$\mod$, and typically it will
also be an index function.

\medskip

In this study, see Section~\ref{sec:main}, we analyze the condition
\begin{equation}
  \label{eq:weaker}
  \reg(\xd) - \reg(\xa) \leq \modi(\alpha) \quad \mbox{for all} \quad  \alpha>0
\end{equation}
(cf.~Assumption~\ref{ass:conv}, and its alternative,~Assumption~\ref{ass:2prime}). Under
these conditions a similar error splitting as in~(\ref{eq:err-split})
is shown as the main result. Notice, that these bounds are required to
hold only for the minimizers~$\xa$ of the noise-free Tikhonov
functional~$\Tik(x;A\xd)$. This resembles the situation for linear
ill-posed problems in Hilbert spaces, where the error is decomposed
into the noise propagation term, usually of the
form~$\delta/\sqrt\alpha$,  and some noise-free
term depending on the solution smoothness, say~$\varphi(\alpha)$, which is called profile function in \cite{HofMat07}. We refer to a detailed
discussion in Section~\ref{sec:main}.
The error bounds will be  complemented by some
discussion on the equivalence of  Assumptions~\ref{ass:conv} and~\ref{ass:2prime}. Also, a discussion on necessary conditions
for an index function $\modi$ to serve as an inequality~(\ref{eq:weaker}) is
given. We mention that the existence of an index function $\modi$ satisfying (\ref{eq:weaker}) is an immediate consequence of \cite[Thm.~3.2]{Fl18} (see also \cite[Remark~2.6]{Fl17}) in combination with the results of Section~\ref{sec:relations}. Precisely, we highlight that the variational
inequality (\ref{eq:vi-Phi}) implies the validity of~(\ref{eq:weaker})
for some specific index functions $\modi$ related to~$\mod$ by some
convex analysis arguments. Then, in Section~\ref{sec:examples}
we present specific applications of this  approach.

In an appendix we
give detailed proofs of the main results (Appendix~\ref{sec:proofs})
and some auxiliary discussion concerning convex index functions
(Appendix~\ref{sec:convex-analysis}).

\section{Assumptions and main results}
\label{sec:main}

In the subsequent analysis, \emph{convex} index functions will be of particular interest, i.e.,~index functions $\varphi$ which obey
$$
\varphi\left(\frac{s+t}{2}\right) \leq \frac 1 2 \lr{\varphi(s) + \varphi(t)},\quad s,t\geq 0.
$$
The inverse of a convex index function is a \emph{concave} index function, and hence the above inequality is reversed. We mention that concave index functions are \emph{sub-linear}, which means that these functions have the property that the quotients~$\varphi(\lambda)/\lambda$ are non-increasing.
Additional considerations concerning convex index functions are
collected in Appendix~\ref{sec:convex-analysis}.

The proofs of the results in this section are technical, and hence
they are postponed to Appendix~\ref{sec:proofs}.

\subsection{Assumptions}
\label{sec:assumptions}

Throughout this study we impose, e.g.,~along the lines
of~\cite{ScKaHoKa12}, the following standard assumptions on
the penalty.
\begin{assumption}[Penalty]\label{ass:penalty}
  The function~$\reg: X \to [0,\infty]$ is a proper, convex functional defined on an Banach space $X$,
which is lower semi-continuous with respect to weak (or weak$^*$) sequential
convergence. Additionally, we assume that
$\reg$ is a stabilizing (weakly coercive) penalty functional, i.e., the sublevel sets
$\mathcal{M}_c:=\{x \in X:\,\reg(x) \le c\}$ of $\reg$ are for all $c \ge 0$ weakly (or weak$^*$) sequentially compact. Moreover, we assume that at least one solution $\xd$ of (\ref{eq:opeq}) with finite penalty value $J(\xd)<\infty$ exists.
\end{assumption}
Consequently, for all $\alpha>0$ and $v \in H$, the sublevel sets of $T_{\alpha}(.,v)$ are weakly (or weak$^*$) sequentially compact. This ensures the existence and stability of
regularized solutions $\xa$ and $\xad$ which are the corresponding minimizers for $v=y$ and $v=\yd$, respectively. In the sequel, we use the symbol $\xd$ only for the always existing $\reg$-minimizing solutions of (\ref{eq:opeq}), i.e.~$J(\xd)=\min \limits_{x \in X: Ax=y} J(x)$.

The fundamental regularity condition is given as follows. To this end,
let $\xa$ be defined as in \eqref{eq:xa}. This assumption controls the
deviation of the penalty at the minimizers from the one at the $\reg$-minimizing solution~$\xd$.
\begin{assumption}[Defect for penalty]\label{ass:conv}
  There is an index function~$\modi$ such that
\begin{equation} \label{eq:new}
\reg(\xp) - \reg(\xa) \leq \modi(\alpha) \quad \mbox{for all} \quad  \alpha>0.
\end{equation}
\end{assumption}

\medskip

It is not difficult to conclude from the minimizing property
of~$\xa$,
\begin{equation} \label{eq:minprop}
\frac{1} {2} \norm{A \xa - A\xd}{}^{2} + \alpha J(\xa) \le  \alpha J(\xd) \,,
 \end{equation}
that the left hand side of \eqref{eq:new}
is nonnegative and hence that
$$\lim_{\alpha \to 0}\reg(\xa) =\reg(\xp) \quad \mbox{and} \quad \frac{1}{2 \alpha} \norm{A \xa - y}{}^{2} \le \reg(\xp) - \reg(\xa), $$
such that  Assumption~\ref{ass:conv} also yields the estimate
\begin{equation}
  \label{eq:residual-bound}
 \frac{1}{2\alpha}\norm{A \xa - y}{}^{2} \leq \modi(\alpha) \quad \mbox{for all} \quad  \alpha>0.
\end{equation}
Instead of controlling the defect for the penalty~$\reg$ one might
control the defect for the overall Tikhonov functional as follows.

\renewcommand{\theassumption}{2$^{\,\prime}$}
\begin{assumption}[Defect for Tikhonov functional]\label{ass:2prime}
  There is an index function~$\modi$ such that
\begin{equation}\label{eq:Modified}
\frac{1}{\alpha}\left( \Tik(\xd;A\xd) - \Tik(\xa;A \xd) \right) \leq \modi(\alpha) \quad \mbox{for all} \quad  \alpha>0.
\end{equation}
\end{assumption}

\medskip

By explicitly writing the left hand side in~(\ref{eq:Modified}) we see
that
$$
\frac{1}{\alpha}\left( \Tik(\xd;A\xd) - \Tik(\xa;A \xd) \right) =
\reg(\xd) - \reg(\xa) - \frac{1}{2\alpha}\norm{A \xa - y}{}^{2},
$$
and hence Assumption~\ref{ass:conv} is stronger than
Assumption~\ref{ass:2prime}, as stated above.
One advantage of Assumption~\ref{ass:2prime} is that it is invariant
with respect to the choice of the minimizers~$\xa$. This is not clear for Assumption~\ref{ass:conv}.
As a remarkable fact we state that both assumptions are
basically equivalent.
\begin{prop}\label{thm:equivalence}
  Assumption~\ref{ass:2prime} yields that
  \begin{equation}
    \label{eq:equivalence3}
    \reg(\xp) - \reg(\xa) \leq 2 \modi(\alpha) \quad \mbox{for all} \quad  \alpha>0.
  \end{equation}
Hence Assumption~\ref{ass:conv} is fulfilled with~$\modi$ replaced by $2\modi$.
\end{prop}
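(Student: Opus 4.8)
The plan is to combine the two pieces of information available about the noise-free minimizer~$\xa$: the hypothesis of Assumption~\ref{ass:2prime} and the first-order optimality that characterizes~$\xa$. Using the identity displayed just before the proposition, Assumption~\ref{ass:2prime} reads
\begin{equation}\label{eq:sketchA}
\reg(\xd) - \reg(\xa) - \frac{1}{2\alpha}\norm{A\xa - y}^{2} \le \modi(\alpha),\quad \alpha>0.
\end{equation}
Abbreviating the penalty defect by $D(\alpha):=\reg(\xd)-\reg(\xa)\ge 0$ and the scaled residual by $r(\alpha):=\frac{1}{2\alpha}\norm{A\xa-y}^{2}\ge 0$, this is just $D(\alpha)-r(\alpha)\le\modi(\alpha)$. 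Since I aim at $D(\alpha)\le 2\modi(\alpha)$, and since the minimizing property~\eqref{eq:minprop} only yields the useless bound $r(\alpha)\le D(\alpha)$ (which gives the vacuous $D\le\modi+D$), the entire proof reduces to the sharper residual estimate
\begin{equation}\label{eq:sketchB}
r(\alpha) \le \tfrac{1}{2} D(\alpha),\qquad\text{i.e.}\qquad \frac{1}{2\alpha}\norm{A\xa-y}^{2}\le \tfrac12\lr{\reg(\xd)-\reg(\xa)}.
\end{equation}

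The key step, and the one I expect to be the crux, is to derive~\eqref{eq:sketchB} from first-order optimality. The minimizer~$\xa$ of $\Tik(\cdot;y)$ satisfies the noise-free analogue of~\eqref{eq:left0}, namely $A^{*}(A\xa-y)+\alpha\,\xi_\alpha=0$ for some subgradient $\xi_\alpha\in\pJ(\xa)$, which is non-empty. Inserting $\xi_\alpha=-\frac1\alpha A^{*}(A\xa-y)$ into the subgradient inequality $\reg(\xd)\ge\reg(\xa)+\scalar{\xi_\alpha}{\xd-\xa}$ and using $A\xd=y$, so that $A\xd-A\xa=-(A\xa-y)$, I would compute
\begin{equation}\label{eq:sketchC}
\scalar{\xi_\alpha}{\xd-\xa} = -\frac1\alpha\scalar{A\xa-y}{A\xd-A\xa} = \frac1\alpha\norm{A\xa-y}^{2}.
\end{equation}
Hence $D(\alpha)=\reg(\xd)-\reg(\xa)\ge\frac1\alpha\norm{A\xa-y}^{2}=2\,r(\alpha)$, which is precisely~\eqref{eq:sketchB}. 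If one prefers to avoid subgradients, the same inequality follows by differentiating the convex map $t\mapsto\Tik(\xa+t(\xd-\xa);y)$ and noting that its right derivative at $t=0^{+}$ is nonnegative by minimality of~$\xa$.

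It then only remains to assemble the two estimates. From $r(\alpha)\le\frac12 D(\alpha)$ we obtain $D(\alpha)-r(\alpha)\ge\frac12 D(\alpha)$, and substituting this into~\eqref{eq:sketchA} gives $\frac12 D(\alpha)\le\modi(\alpha)$, that is, $\reg(\xd)-\reg(\xa)\le 2\modi(\alpha)$ for all $\alpha>0$, which is~\eqref{eq:equivalence3}; the concluding assertion that Assumption~\ref{ass:conv} holds with $\modi$ replaced by $2\modi$ is then immediate. The single delicate point is the factor~$\tfrac12$ in~\eqref{eq:sketchB}: it is a genuine factor-two improvement over the crude bound coming from~\eqref{eq:minprop}, and it is exactly this factor that makes the right-hand side come out as $2\modi(\alpha)$ rather than with an enlarged argument such as $\modi(2\alpha)$. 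Any cruder residual control (for instance one relying only on the monotonicity of $\alpha\mapsto\norm{A\xa-y}$ or on convexity of the associated value-function gap) would lose this sharpness and would not suffice.
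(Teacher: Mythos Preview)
Your proof is correct and follows essentially the same route as the paper: both arguments use the first-order optimality condition for~$\xa$ to obtain $\reg(\xd)-\reg(\xa)\ge\frac{1}{\alpha}\norm{A\xa-y}^{2}$ (the paper phrases this as nonnegativity of $B_{\xi_\alpha}(\xa;\xd)$ via item~(\ref{it:bxaxp}) of Lemma~\ref{lem:distance-bounds}), and then combine it with Assumption~\ref{ass:2prime}. The only cosmetic difference is the final algebra---the paper first deduces $\frac{1}{2\alpha}\norm{\ra}^{2}\le\modi(\alpha)$ and adds it back, whereas you bound $\tfrac12 D\le D-r\le\modi$ directly.
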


\begin{rem}\label{rem:multiple}
The above result has an important impact, and we return to the choice
of the minimizers~$\xa,\xad$ from \eqref{eq:xa} and~\eqref{eq:xad},
respectively. As mentioned before, the functional on the left-hand side of \eqref{eq:Modified} is independent of the choice of the minimizers~$\xa$, due to the uniqueness of the value of the Tikhonov functional at the minimizers
(cf., e.g.,~\cite[Sec.~3.2]{ItoJin15}). Thus, if Assumption~\ref{ass:2prime} is fulfilled for one selection~$\xa,$ $\alpha>0,$ then this holds true for arbitrary selections. Since Assumption~\ref{ass:2prime} implies Assumption~\ref{ass:conv} (at the expense of a factor~2) the latter will be fulfilled for any selection.
Conversely, if Assumption~\ref{ass:conv} holds for some selection~$\xa, \ \alpha >0$, then this yields the validity of Assumption~\ref{ass:2prime}, but then extends to any other choice of minimizers. Again, by the above proposition this implies that any other choice of minimizers will obey Assumption~\ref{ass:conv}, by losing a factor $2$ at most.
\end{rem}

We finally discuss which index functions may serve as upper bounds in
either of the assumptions~\ref{ass:conv} or~\ref{ass:2prime},
respectively.
We formulate this as follows.

\begin{prop}  \label{prop:dich}
Suppose Assumption~\ref{ass:conv} holds with index function $\modi$. Then
the following is true:\\
 {\bf Either} $\;J(\xd) = \min \limits_{x\in X} J(x)$,\\
 and then $J(\xa) =J(\xd)$ for each $\alpha>0$, and any index function~$\modi$ is a valid
 bound in \eqref{eq:new},\\
 {\bf or} $\;J(\xd) > \min \limits_{x\in X}  J(x)$,\\
 and then $\modi$ increases near zero at most linearly.
\end{prop}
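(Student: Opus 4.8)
The plan is to handle the two alternatives of the dichotomy separately; the first is immediate, and the second carries the entire content. For the alternative $J(\xd)=\min_{x\in X}J(x)$ I would argue directly from the minimizing property~\eqref{eq:minprop}: discarding the nonnegative residual term gives $\alpha J(\xa)\le\alpha J(\xd)$, hence $J(\xa)\le J(\xd)$ for every $\alpha>0$. Since $\xd$ is now a global minimizer of $\reg$, the reverse inequality $J(\xd)\le J(\xa)$ holds as well, so $J(\xa)=J(\xd)$ and the left-hand side of~\eqref{eq:new} vanishes identically. As every index function is positive, any $\modi$ then trivially validates~\eqref{eq:new}, which settles this case.

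For the alternative $J(\xd)>m:=\min_{x\in X}J(x)$ the goal is to produce a linear lower bound $\modi(\alpha)\ge\reg(\xd)-\reg(\xa)\ge c\,\alpha$ for all small $\alpha$ with some $c>0$; this linear lower bound is exactly the asserted first-order saturation, namely that no admissible $\modi$ can tend to zero faster than linearly. First I would secure a global minimizer $x_0$ of $\reg$ with $J(x_0)=m$, whose existence follows from Assumption~\ref{ass:penalty}: the weak (weak$^{*}$) sequential compactness of the sublevel sets together with weak (weak$^{*}$) lower semicontinuity of $\reg$ forces a minimizing sequence to subconverge to a minimizer. The decisive point, and the only place the hypothesis $J(\xd)>m$ enters, is that $x_0$ cannot solve the equation: if $Ax_0=y$, then $x_0$ would be a solution with $J(x_0)=m<J(\xd)=\min_{Ax=y}J(x)$, contradicting the definition of the $\reg$-minimizing solution. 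Hence $d_0:=\norm{Ax_0-y}>0$.

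With $x_0$ at hand I would test the minimizing property of $\xa$ against the segment $x_t:=(1-t)\xd+t\,x_0$, $t\in[0,1]$, which lies in $\mathrm{dom}(\reg)$ by convexity. Linearity of $A$ and $A\xd=y$ give $\norm{Ax_t-y}=t\,d_0$, while convexity of $\reg$ yields $\reg(x_t)\le\reg(\xd)-t\,(J(\xd)-m)$. Inserting $x_t$ into $T_\alpha(\cdot;y)$, comparing with the minimizer $\xa$, and dropping the nonnegative term $\tfrac12\norm{A\xa-y}^{2}$ leads to
\begin{equation*}
\reg(\xd)-\reg(\xa)\ \ge\ t\,(J(\xd)-m)-\frac{t^{2}d_0^{2}}{2\alpha},\qquad t\in[0,1].
\end{equation*}
Maximizing the right-hand side over $t$ at $t^{\ast}=\alpha\,(J(\xd)-m)/d_0^{2}$, which is admissible once $\alpha\le d_0^{2}/(J(\xd)-m)$, gives $\reg(\xd)-\reg(\xa)\ge c\,\alpha$ with $c=(J(\xd)-m)^{2}/(2d_0^{2})>0$. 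Combined with Assumption~\ref{ass:conv} this is the desired linear lower bound on $\modi$ near zero.

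The routine parts are the convexity and linearity manipulations and the one-variable optimization; I expect the genuine obstacle to be conceptual rather than computational, namely isolating the right competitor family. The segment toward a global penalty minimizer is what converts the gap $J(\xd)-m>0$ into a first-order-in-$\alpha$ decrease of the penalty, at the controlled quadratic-in-$t$ cost $\tfrac12 t^{2}d_0^{2}$ in the discrepancy; balancing these two effects is exactly what forces the linear rate. A secondary point to treat with care is the admissibility bound $t^{\ast}\le1$, which restricts the estimate to small $\alpha$ — harmless here, since the claim concerns only the behaviour near zero.
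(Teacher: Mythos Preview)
Your argument is correct, but it takes a genuinely different route from the paper's. The paper argues the second alternative by contradiction through the dual picture: assuming~$\modi(\alpha_k)/\alpha_k\to 0$ along some sequence, the residual bound~\eqref{eq:residual-bound} forces $\norm{A x_{\alpha_k}-y}/\alpha_k\to 0$, whence the subgradients $\xi_{\alpha_k}=-\tfrac1{\alpha_k}A^\ast(Ax_{\alpha_k}-y)$ supplied by the optimality condition tend to zero in~$X^\ast$. Passing to the limit in the subgradient inequality $J(z)\ge J(x_{\alpha_k})+\langle\xi_{\alpha_k},z-x_{\alpha_k}\rangle$ (with $J(x_{\alpha_k})\to J(\xd)$ from Assumption~\ref{ass:conv} and the $x_{\alpha_k}$ bounded by the stabilizing property) yields $J(z)\ge J(\xd)$ for all~$z$, contradicting $J(\xd)>\min J$.

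Your approach is entirely primal and constructive: you test the minimizing property of~$\xa$ against convex combinations of~$\xd$ and a global penalty minimizer~$x_0$, balance the linear penalty gain against the quadratic residual cost, and obtain an explicit constant $c=(J(\xd)-m)^2/(2d_0^2)$ in the lower bound $\modi(\alpha)\ge c\alpha$. This is more elementary --- it avoids optimality conditions and duality altogether --- and in fact mirrors the segment-testing technique that the paper itself employs in the proof of Proposition~\ref{prop:nosuper}. The paper's route, by contrast, highlights the role of the subgradient structure and ties the saturation phenomenon to the vanishing of~$\xi_\alpha$. Both arguments need the existence of a global minimizer from Assumption~\ref{ass:penalty}, which you correctly invoke (the paper takes this implicitly).
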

We shall call the first case \emph{singular}.  In this case, where $J(\xd) = \min _{x\in X} J(x)$, the choice of the regularization parameter loses importance, which is also the case if the phenomenon of exact penalization occurs
(see \cite{BurOsh04} and more recently in~\cite{AnzHofMat14}).

\subsection{Main results}
\label{sec:main-results}

We turn to stating the main results, which highlight the impact of Assumption~\ref{ass:conv} and Assumption~\ref{ass:2prime} on
the overall error, measured  by the Bregman distance.
\begin{theo}\label{th:main}
  Under Assumption~\ref{ass:conv} we have that
$$
\Bj{\xi_\alpha^\delta}(\xad;\xp) \leq \frac{\delta^{2}}{2\alpha} +
\modi(\alpha) \quad \mbox{for all} \quad  \alpha>0.
$$
\end{theo}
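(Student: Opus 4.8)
The plan is to reduce everything to the defining minimality of the two Tikhonov minimizers and to isolate the noise level by a single polarization identity. First I would write out the Bregman distance by its definition~\eqref{eq:bregman},
\[
\Bj{\xi_\alpha^\delta}(\xad;\xd) = \reg(\xd) - \reg(\xad) - \scalar{\xi_\alpha^\delta}{\xd - \xad},
\]
and eliminate the subgradient through the optimality condition~\eqref{eq:left0}, which gives $\alpha\xi_\alpha^\delta = A^{\ast}(\yd - A\xad)$. Since $A\xd = y$, the pairing becomes $\scalar{\xi_\alpha^\delta}{\xd - \xad} = \frac{1}{\alpha}\scalar{\yd - A\xad}{A\xd - A\xad}$, so that the whole Bregman distance is expressed purely through residuals in $H$ together with the penalty difference $\reg(\xd) - \reg(\xad)$.

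Next I would apply the polarization identity to the cross term, using once more $A\xd = y$,
\[
\scalar{\yd - A\xad}{A\xd - A\xad} = \half\lr{\norm{A\xad - \yd}{}^{2} + \norm{A\xad - A\xd}{}^{2} - \norm{\yd - y}{}^{2}}.
\]
Multiplying the Bregman distance by $\alpha$ and inserting this, the summand $-\half\norm{A\xad - A\xd}{}^{2}$ is non-positive and may be discarded, while $\half\norm{\yd - y}{}^{2} \le \delta^{2}/2$ by~\eqref{eq:noise}. This already yields
\[
\alpha\,\Bj{\xi_\alpha^\delta}(\xad;\xd) \le \alpha\lr{\reg(\xd) - \reg(\xad)} - \half\norm{A\xad - \yd}{}^{2} + \frac{\delta^{2}}{2},
\]
which accounts for the noise summand $\delta^{2}/(2\alpha)$ in the claimed bound.

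The key step, and the one requiring care, is that Assumption~\ref{ass:conv} controls $\reg(\xd) - \reg(\xa)$ at the \emph{noise-free} minimizer, whereas the expression above features the \emph{noisy} minimizer $\xad$. I would bridge this gap by recognizing that the first two summands reassemble into the noisy Tikhonov functional,
\[
\alpha\lr{\reg(\xd) - \reg(\xad)} - \half\norm{A\xad - \yd}{}^{2} = \alpha\reg(\xd) - \Tik(\xad;\yd),
\]
and then invoking the minimizing property $\Tik(\xad;\yd) \le \Tik(\xa;\yd)$ of $\xad$ from~\eqref{eq:xad} with comparison element $\xa$. This replaces $\xad$ by $\xa$ at the cost of an inequality, giving the upper bound $\alpha\lr{\reg(\xd) - \reg(\xa)} - \half\norm{A\xa - \yd}{}^{2}$; discarding the non-positive residual term and applying Assumption~\ref{ass:conv} then bounds this by $\alpha\modi(\alpha)$. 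Dividing through by $\alpha$ delivers the assertion.

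The only genuine obstacle is this rerouting from $\xad$ to $\xa$: the noise-level splitting has to be carried out \emph{before} using minimality, since it is the polarization identity that cleanly separates the $\delta^{2}$ contribution from the residual $\norm{A\xad - \yd}{}^{2}$ needed to form the noisy Tikhonov functional. Once those two residual terms are paired correctly, the minimizing property and Assumption~\ref{ass:conv} close the estimate with no further smoothness input, which is precisely the Hilbert-space-like error splitting the theorem advertises.
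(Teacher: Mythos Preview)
Your argument is correct. The polarization identity cleanly isolates the noise contribution, and the subsequent use of the minimizing property $\Tik(\xad;\yd)\le\Tik(\xa;\yd)$ is exactly what reroutes from the noisy to the noise-free minimizer so that Assumption~\ref{ass:conv} can be applied.

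The paper takes a different route. Rather than proving Theorem~\ref{th:main} directly, it establishes the stronger Theorem~\ref{thm:mod} (under the weaker Assumption~\ref{ass:2prime}) and obtains Theorem~\ref{th:main} as an immediate corollary. The proof of Theorem~\ref{thm:mod} proceeds via the three-point identity for Bregman distances, splitting $\Bj{\xi_\alpha^\delta}(\xad;\xd)$ through the intermediate point~$\xa$ into $\Bj{\xi_\alpha}(\xa;\xd)+\Bj{\xi_\alpha^\delta}(\xad;\xa)+\langle\xi_\alpha-\xi_\alpha^\delta,\xd-\xa\rangle$, and then bounding each summand separately using the optimality conditions (this is the content of Lemma~\ref{lem:distance-bounds}). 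Your approach is more elementary and self-contained for Theorem~\ref{th:main} as stated, since it avoids introducing the auxiliary Bregman distances and the three-point machinery. The paper's approach, on the other hand, buys the result under the genuinely weaker Assumption~\ref{ass:2prime}; your argument, after discarding $-\tfrac{1}{2}\norm{A\xa-\yd}^{2}$ wholesale, relies on bounding $\reg(\xd)-\reg(\xa)$ directly and so does not immediately extend to that setting without further work.
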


The proof of Theorem~\ref{th:main} is a simple consequence of the
following result, which may be also of its own interest.
\renewcommand{\thetheo}{1$^{\,\prime}$}
\begin{theo}\label{thm:mod}
Suppose that Assumption~\ref{ass:2prime}{} is satisfied with an
index function $\modi$.
Then  an error estimate of the type
\begin{equation}\label{eq:upperbound2}
\Bj{\xi_\alpha^\delta}(\xad;\xd) \leq \frac{\delta^2}{2\alpha} +  \modi(\alpha)\quad \mbox{for all} \quad  \alpha>0  \end{equation}
holds.
\end{theo}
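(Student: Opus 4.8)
The plan is to reduce the Bregman distance to values of the \emph{noise-free} Tikhonov functional $\Tik(\cdot;A\xd)$, so that Assumption~\ref{ass:2prime} applies verbatim, and then to absorb all noise-dependent terms into $\delta^{2}/(2\alpha)$ by an elementary Young-type estimate. First I would expand the Bregman distance from its definition~\eqref{eq:bregman} and eliminate the subgradient $\xi_\alpha^\delta$ through the optimality condition~\eqref{eq:left0}, which reads $\alpha\,\xi_\alpha^\delta = -A^{*}(A\xad - \yd)$. Since $\scalar{A^{*}(A\xad-\yd)}{\xd-\xad} = \scalar{A\xad-\yd}{A\xd - A\xad}$, this yields the exact identity
\begin{equation*}
\alpha\, \Bj{\xi_\alpha^\delta}(\xad;\xd) = \alpha\lr{\reg(\xd) - \reg(\xad)} + \scalar{A\xad - \yd}{A\xd - A\xad}.
\end{equation*}

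The crucial step is to recast the penalty difference in terms of the noise-free functional. Because $\Tik(\xd;A\xd) = \alpha\reg(\xd)$ has vanishing residual, completing the square with the noise-free residual $\half\norm{A\xad - A\xd}{}^{2}$ converts $\alpha(\reg(\xd)-\reg(\xad))$ into $\Tik(\xd;A\xd) - \Tik(\xad;A\xd) + \half\norm{A\xad - A\xd}{}^{2}$. Now $\xa$ minimizes $\Tik(\cdot;A\xd)$ by~\eqref{eq:xa}, hence $\Tik(\xad;A\xd) \ge \Tik(\xa;A\xd)$, and therefore
$$
\Tik(\xd;A\xd) - \Tik(\xad;A\xd) \le \Tik(\xd;A\xd) - \Tik(\xa;A\xd) \le \alpha\,\modi(\alpha)
$$
by Assumption~\ref{ass:2prime}, i.e.~\eqref{eq:Modified}. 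This is the heart of the argument: the minimizing property of the \emph{noise-free} solution $\xa$ lets us pass from $\xad$ to $\xa$ in the correct direction, which is precisely where the smoothness hypothesis enters.

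It then remains to bound the collected quadratic terms $\half\norm{A\xad - A\xd}{}^{2} + \scalar{A\xad - \yd}{A\xd - A\xad}$. Setting $u := A\xad - A\xd$ and using $\norm{\yd - A\xd}{} \le \delta$, one checks that these terms equal $-\half\norm{u}{}^{2} + \scalar{\yd - A\xd}{u}$, which by Cauchy--Schwarz and Young's inequality is at most $-\half\norm{u}{}^{2} + \delta\norm{u}{} \le \frac{\delta^{2}}{2}$. Combining the two estimates gives $\alpha\,\Bj{\xi_\alpha^\delta}(\xad;\xd) \le \alpha\,\modi(\alpha) + \frac{\delta^{2}}{2}$, and dividing by $\alpha$ yields~\eqref{eq:upperbound2}.

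The only delicate point is the bookkeeping in the completion of squares that must simultaneously expose the noise-free functional $\Tik(\cdot;A\xd)$ for Assumption~\ref{ass:2prime} and leave a residual expression of the right sign (namely the concave quadratic $-\half\norm{u}{}^{2}+\delta\norm{u}{}$) for the Young estimate. Once the terms are grouped so that both the minimizing property of $\xa$ and the noise estimate line up, each individual step is routine, and no concavity or further structural assumption on $\modi$ is needed.
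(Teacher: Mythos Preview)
Your proof is correct and in fact more direct than the paper's own argument. The paper proceeds via the three-point identity
\[
\Bj{\xi_\alpha^\delta}(\xad;\xd) = \Bj{\xi_\alpha}(\xa;\xd) + \Bj{\xi_\alpha^\delta}(\xad;\xa) + \langle \xi_\alpha - \xi_\alpha^\delta,\xd -\xa\rangle,
\]
and then invokes a separate lemma (Lemma~\ref{lem:distance-bounds}) that bounds $\Bj{\xi_\alpha}(\xa;\xd)$ and $\Bj{\xi_\alpha^\delta}(\xad;\xa)$ individually, using the optimality conditions for \emph{both} $\xa$ and $\xad$; the various residual cross terms then cancel exactly. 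Your route bypasses all of this: you use only the optimality condition~\eqref{eq:left0} for $\xad$, and the noise-free minimizer $\xa$ enters solely through the inequality $\Tik(\xad;A\xd)\ge \Tik(\xa;A\xd)$, never through its subgradient $\xi_\alpha$. The completion-of-squares that produces the concave quadratic $-\tfrac{1}{2}\norm{u}{}^{2}+\delta\norm{u}{}$ is the single nontrivial step, and you have it right.

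What the paper's longer route buys is the intermediate information in Lemma~\ref{lem:distance-bounds}: the bound $\Bj{\xi_\alpha}(\xa;\xd)\le \modi(\alpha)-\tfrac{1}{2\alpha}\norm{\ra}{}^{2}$ is of independent interest and is reused to prove Proposition~\ref{thm:equivalence} (the near-equivalence of Assumptions~\ref{ass:conv} and~\ref{ass:2prime}). Your argument does not yield that byproduct, but for Theorem~\ref{thm:mod} alone it is cleaner and uses strictly less machinery.
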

Since, as mentioned above, Assumption~\ref{ass:conv} is stronger than Assumption~\ref{ass:2prime} it is enough to prove Theorem~\ref{thm:mod}.

\subsection{Discussion}
\label{sec:discussion}

Resulting from Theorems~\ref{th:main} and Theorem~\ref{thm:mod}, the best possible bound for the Bregman distance between the regularized solutions and $\xd$ as a function of $\delta>0$ takes place in both cases if $\alpha=\alpha_*>0$ is chosen such that
the right-hand side $\frac{\delta^2}{2\alpha} +  \modi(\alpha)$ is minimized, i.e.,
\begin{equation}\label{eq:bestrate}
B_{\xi_{\alpha_*}^\delta}\lr{x_{\alpha_*}^\delta;\xd} \leq  \inf_{\alpha >0}  \set{\frac{\delta^2}{2\alpha} +  \modi(\alpha)},
\end{equation}
which determines, from this perspective, the best possible convergence rate of $B_{\xi_{\alpha_*}^\delta}\lr{x_{\alpha_*}^\delta;\xd}$ to zero as $\delta \to 0$. Consequently,
this convergence rate is the higher, the faster the decay rate of $\modi(\alpha) \to 0$ as $\alpha \to 0$ is. As expressed in the non-singular case of Proposition~\ref{prop:dich}, the function~$\modi$ cannot increase from zero super-linearly, and the limiting case is obtained for~$\modi(\alpha) \sim \alpha,\ \alpha\to 0$.
From this perspective, the maximally described rate is  $B_{\xi_{\alpha_*}^\delta}\lr{x_{\alpha_*}^\delta;\xd}  \sim \delta$ as $\delta \to 0$, which is obtained whenever for example the regularization parameter is chosen as $\alpha_*=\alpha(\delta)\sim \delta$. For linear ill-posed equations in Hilbert spaces and using the standard penalty $J(x)=\|x\|_X^2$ (see Section~\ref{subsec:quadratic}),   this results in the error rate
$B_{\xi_{\alpha_*}^\delta}\lr{x_{\alpha_*}^\delta;\xd}=\norm{x_{\alpha_*}^\delta- \xd}_X^2 = \mathcal O(\delta)$.
However, resulting from Theorems~\ref{th:main} and Theorem~\ref{thm:mod} the overall best possible convergence rate $\norm{x_{\alpha_*}^\delta- \xd}_X^2=\mathcal{O}(\delta^{4/3})$  attainable for Tikhonov regularization cannot be obtained, and indeed our analysis is confined to the low rate case expressed be the range-type source condition $\xd\in\range(A^\ast)$.
This is also the case for all other approaches which are based on the minimizing property $T_\alpha(\xad;\yd) \le T_\alpha(\xd;\yd)$ only, including approaches using variational source conditions (see Section~\ref{sec:relations}
below). For alternative techniques leading to enhanced convergence rates we refer to \cite{NHHKT10,
Resm05,ResSch06}, \cite[Sect.~4.2.4]{ScKaHoKa12} and references therein.

\medskip

Now we return to the error estimate (\ref{eq:upperbound2}) for general convex penalties~$J$. Since the upper bound with respect to $\alpha>0$ is decomposed into a sum of a continuous decreasing function~$\delta^2/(2\alpha)$ and an increasing continuous function
$\modi(\alpha)$, the minimizer always exists. Given~$\modi$, let us assign the companion~$\Theta(\alpha):= \sqrt{\alpha \modi(\alpha)},\ \alpha>0$. If we then let~$\alpha_\ast$ be obtained from calibrating both summands as
\begin{equation}
    \alpha_\ast = \alpha_\ast(\delta) := \lr{\Theta^2}^{-1}\lr{\frac{\delta^2}{2}} = \Theta^{-1}\lr{\frac{\delta}{\sqrt 2}},
\end{equation}
then we find that
\begin{equation}
\Bj{\xi_{\alpha_\ast}^\delta}(x_{\alpha_\ast}^\delta;\xd) \leq 2 \modi\lr{\Theta^{-1}\lr{\frac{\delta}{\sqrt 2}}},
\end{equation}
and the optimality of this bound will be discussed in the examples presented below in Section~\ref{sec:examples}.
\medskip

It is interesting to separately discuss the singular case, i.e., when
$\;J(\xd) = \min \limits_{x\in X} J(x)$. We claim that then~$\Bj{\xi_\alpha^\delta}(\xad;\xd) =0$ when the subdifferential~$\xi_\alpha^\delta=\pJ(\xad)$ obeys the optimality condition (\ref{eq:left0}),
i.e., we have $\alpha\xi_\alpha^\delta = A^\ast\lr{\yd - A\xad}$.
If we now look at the minimizing property of~$\xad$ then we see that
$$
\frac 1 2 \norm{A\xad - \yd}^2 + \alpha \reg(\xad) \leq \alpha \reg(\xd),
$$
which, in the singular case,  requires to have that~$\norm{A\xad - \yd}=0$, and hence that~${\xi_\alpha^\delta} = 0$.
This yields  for the Bregman distance that
\begin{align*}
   B_{\xi_{\alpha}^\delta}\lr{x_{\alpha}^\delta;\xd}
   &=\reg(\xd) - \reg(\xad) + \scalar{\xi_\alpha^\delta}{\xad - \xd} \leq 0,
\end{align*}
such that the Bregman distance equals zero in the singular case.
\medskip

We already emphasized that the upper estimate of the error measure $\Bj{\xi_\alpha^\delta}(\xad;\xd)$ in (\ref{eq:upperbound2}) consists of two terms, the first $\delta$-dependent noise propagation, and the second $\delta$-independent term which expresses the smoothness of the solution $\xd$ with respect to the forward operator $A$. In the study~\cite{HofMat07} such a
decomposition was comprehensively analyzed for general linear regularization methods applied to~(\ref{eq:opeq}) in a Hilbert space setting, i.e.,  for linear mappings $\yd \mapsto \xad$, and for the norm
as an error measure the $\delta$-independent term was called \emph{profile function} there, because this term completely determines the error profile. For the current setting,
the index function $\modi$ plays a similar role, although the mapping $\yd \mapsto \xad$ is \emph{nonlinear} for general convex penalties $\reg$ different from norm squares in Hilbert space $X$.
This shows the substantial meaning of the right-hand function $\mod$ in the inequality (\ref{eq:new}) of Assumption~\ref{ass:conv}.

\section{Relation to variational inequalities}
\label{sec:relations}
In this section we shall prove that a
variational inequality of type (\ref{eq:vi-Phi}) implies the
validity of Assumption~\ref{ass:2prime} and a fortiori Assumption~\ref{ass:conv}.
More precisely, we consider the situation that there is an  index function~$\mod$ such that
\begin{equation}\label{eq:kindermann}
  \reg(\xd) - \reg(x) \leq \mod(\|A x - A \xd \|) \quad \mbox{for all} \quad  x\in X.
\end{equation}
First, similarly to Proposition~\ref{prop:dich} we highlight that the choice of functions~$\mod$ in~\eqref{eq:kindermann} is not arbitrary.

\begin{prop} \label{prop:nosuper}
Suppose that a variational inequality~\eqref{eq:kindermann} holds with an index function~$\mod$.
The following is true:\\
 {\bf Either} $\;J(\xd) = \min \limits_{x\in X} J(x)$,\\
 and then any index function~$\mod$ is a valid
 bound in \eqref{eq:kindermann},\\
 {\bf or} $\;J(\xd) > \min \limits_{x\in X}  J(x)$,\\
 and then $\mod$ increases near zero at most linearly.
\end{prop}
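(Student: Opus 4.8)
The plan is to establish the dichotomy directly, handling the two alternatives separately; since trivially $\reg(\xd)\ge \min_{x\in X}\reg(x)$, they are mutually exclusive and exhaustive. The singular alternative is immediate: if $\reg(\xd)=\min_{x\in X}\reg(x)$, then $\reg(\xd)-\reg(x)\le 0$ for every $x\in X$, whereas the right-hand side $\mod(\norm{Ax-A\xd})$ of \eqref{eq:kindermann} is non-negative for any index function $\mod$ (reading $\mod(0):=\lim_{t\to+0}\mod(t)=0$ in the degenerate case $Ax=A\xd$). Hence \eqref{eq:kindermann} holds for an arbitrary $\mod$, which is the first claim, and all the work lies in the second alternative.

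So assume $\reg(\xd)>\min_{x\in X}\reg(x)$. The idea I would use is to probe \eqref{eq:kindermann} along the segment joining $\xd$ to a competitor of strictly smaller penalty and to exploit convexity of $\reg$. Fix $\bar x\in X$ with $\reg(\bar x)<\reg(\xd)$ and put $c:=\reg(\xd)-\reg(\bar x)>0$. For $\lambda\in(0,1]$ set $x_\lambda:=\lambda\bar x+(1-\lambda)\xd$. Convexity of $\reg$ gives $\reg(x_\lambda)\le\lambda\reg(\bar x)+(1-\lambda)\reg(\xd)$, hence $\reg(\xd)-\reg(x_\lambda)\ge\lambda c$, while linearity of $A$ yields $\norm{Ax_\lambda-A\xd}=\lambda\,\norm{A\bar x-A\xd}$. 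Abbreviating $d:=\norm{A\bar x-A\xd}$ and inserting $x_\lambda$ into \eqref{eq:kindermann} gives
$$
\lambda c\;\le\;\reg(\xd)-\reg(x_\lambda)\;\le\;\mod\lr{\lambda d},\qquad \lambda\in(0,1].
$$
Substituting $t=\lambda d$ turns this into $\mod(t)\ge (c/d)\,t$ for all $t\in(0,d]$, so $\mod$ is bounded below by a linear function near the origin; this is exactly the assertion that $\mod$ increases near zero at most linearly.

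The one step that requires care --- and the genuine obstacle --- is ensuring $d=\norm{A\bar x-A\xd}>0$, for otherwise the substitution is vacuous and the linear lower bound degenerates. Here I would invoke the defining property of $\xd$ as the $\reg$-minimizing solution of \eqref{eq:opeq}, namely $\reg(\xd)=\min\limits_{x\in X:\,Ax=y}\reg(x)$ together with $A\xd=y$. Were $A\bar x=A\xd=y$, then $\bar x$ would itself solve the operator equation, so minimality would force $\reg(\xd)\le\reg(\bar x)$, contradicting $\reg(\bar x)<\reg(\xd)$; therefore $A\bar x\neq A\xd$ and $d>0$. Consistently, the configuration $A\bar x=A\xd$ would already be incompatible with \eqref{eq:kindermann} itself, since it would demand $\lambda c\le\mod(0)=0$. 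This argument runs in parallel to the proof of Proposition~\ref{prop:dich}, with the noise-free residual $\norm{A\xa-A\xd}$ occurring there replaced by the image distance $\norm{A\bar x-A\xd}$ measured along the probing segment; in particular, no concavity of $\mod$ is needed.
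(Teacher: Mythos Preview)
Your proof is correct and takes essentially the same route as the paper's: both probe \eqref{eq:kindermann} along the segment from $\xd$ to a competitor of smaller penalty and invoke convexity of $\reg$, the paper arguing by contrapositive (assuming $\mod(t)/t\to 0$ and deducing the singular case) while you argue directly. You are also more explicit about why $d=\norm{A\bar x-A\xd}>0$, a point the paper's proof leaves implicit.
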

\begin{proof}
First, if~$\;J(\xd) = \min \limits_{x\in X} J(x)$ then the left hand side in~\eqref{eq:kindermann} is non-positive, and hence any non-negative upper bound is valid. Otherwise, suppose that~$\mod(t)/t$ decreases to zero as~$t\to 0$.
The inequality (\ref{eq:kindermann}) taken at the point $\xd+t(x-\xd),\;0<t<1,$ attains the form
$$J(\xd)-J((1-t)\xd+tx) \le \mod(t\|Ax-A\xd\|), $$
where we can estimate from below the left-hand side as
$$ J(\xd)-J((1-t)\xd+tx) \ge J(\xd)-(1-t)J(\xd)-tJ(x)=t(J(\xd)-J(x)), $$
because $J$ is a convex functional. From this we directly derive
$$ J(\xd)-J(x) \le \frac{\mod(t\|Ax-A\xd\|)}{t}= \|Ax-A\xd\|\frac{\mod(t\|Ax-A\xd\|)}{t\|Ax-A\xd\|},$$
where under the assumption of the lemma the right-hand side tends to zero as $t \to 0$. Consequently, we have $J(\xd) \le J(x)$ for all $x \in X$. This completes the proof.
\end{proof}

The main result in this section reads as follows:

\begin{prop} \label{pro:peter}
 Suppose that a variational inequality~(\ref{eq:kindermann}) holds for some index function~$\mod$. Let us consider the related index function  $\tilde{\mod}(t):= \mod(\sqrt{t}),\;t>0$.
 Then the following assertions hold true.
 \begin{enumerate}
     \item The condition~(\ref{eq:Modified}) is valid with
  a function
\begin{equation} \label{eq:infpos}
\modi(\alpha) =   \sup_{t>0} \left[ \mod(t) - \frac{t^2}{2 \alpha}\right],
\end{equation}
which is increasing for all $\alpha>0$ but that may take values~$+\infty$.
\item If the function~$\tilde\mod$ is concave then the function~$\modi$ from (\ref{eq:infpos}) has the representation
\begin{equation*}
    \modi(\alpha):= \frac{\tilde{\mod}^{-\ast}(2\alpha)}{2\alpha},\quad \alpha >0,
\end{equation*}
where $\tilde{\mod}^{-\ast}$ is the Fenchel conjugate to the convex index function  $\tilde{\mod}^{-1}$ (cf.~Appendix~\ref{sec:convex-analysis}).
\item\label{it:3}
Finally, if moreover the quotient function $s^2/\mod(s)$, is an index function and hence strictly increasing for all $0<s<\infty$, then $\modi$ also constitutes an index function. Theorem~\ref{thm:mod}
yields the error estimate (\ref{eq:upperbound2}). \end{enumerate}
\end{prop}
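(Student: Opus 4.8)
The plan is to handle the three assertions in turn: the first two by direct manipulation of the supremum in~\eqref{eq:infpos}, and the third, which is the substantive one, by a careful analysis of where that supremum is located.

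For assertion~(1) I would use the variational inequality~\eqref{eq:kindermann} at the single point $x=\xa$. Since $A\xd=y$, the explicit form of the left-hand side of~\eqref{eq:Modified} recorded right after Assumption~\ref{ass:2prime} reads
\[
\frac{1}{\alpha}\lr{\Tik(\xd;A\xd)-\Tik(\xa;A\xd)}=\reg(\xd)-\reg(\xa)-\frac{1}{2\alpha}\norm{A\xa-y}^{2}.
\]
Inserting $\reg(\xd)-\reg(\xa)\le\mod(\norm{A\xa-y})$ and abbreviating $t:=\norm{A\xa-y}$ bounds this by $\mod(t)-t^{2}/(2\alpha)$, hence by the supremum over all $t>0$, which is exactly $\modi(\alpha)$ from~\eqref{eq:infpos}; this is~\eqref{eq:Modified}. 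Monotonicity of $\modi$ is immediate, since for each fixed $t$ the map $\alpha\mapsto\mod(t)-t^{2}/(2\alpha)$ is increasing, so the pointwise supremum is increasing; finiteness need not hold, whence the value $+\infty$ is admitted.

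For assertion~(2) I would substitute $s=t^{2}$ in~\eqref{eq:infpos}, using $\mod(\sqrt s)=\tilde\mod(s)$, to obtain $\modi(\alpha)=\sup_{s>0}[\tilde\mod(s)-s/(2\alpha)]$. Assuming $\tilde\mod$ concave, its inverse $\tilde\mod^{-1}$ is a convex index function (cf.~Appendix~\ref{sec:convex-analysis}), and the change of variable $u=\tilde\mod(s)$, $s=\tilde\mod^{-1}(u)$, recasts the supremum as $\sup_{u}[u-\tilde\mod^{-1}(u)/(2\alpha)]$; arguments outside the range of $\tilde\mod$ are harmless, being assigned the value $+\infty$ by the convex-analytic convention. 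Factoring out $1/(2\alpha)$ and recognising the Fenchel conjugate then gives
\[
\modi(\alpha)=\frac{1}{2\alpha}\sup_{u}\lr{2\alpha\,u-\tilde\mod^{-1}(u)}=\frac{\tilde\mod^{-\ast}(2\alpha)}{2\alpha},
\]
which is the claimed representation.

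The main work, and the step I expect to be the real obstacle, is assertion~(3). Writing $\rho(s):=s^{2}/\mod(s)$, an index function by hypothesis, the integrand factors as $\mod(t)-t^{2}/(2\alpha)=\mod(t)\lr{1-\rho(t)/(2\alpha)}$, which is positive precisely for $\rho(t)<2\alpha$ and vanishes at $t=\rho^{-1}(2\alpha)$. Evaluating at $t=\rho^{-1}(\alpha)$ from below, and bounding by $\mod(t)$ on $(0,\rho^{-1}(2\alpha))$ from above, yields the sandwich
\[
\half\,\mod\lr{\rho^{-1}(\alpha)}\le\modi(\alpha)\le\mod\lr{\rho^{-1}(2\alpha)},
\]
whose two sides are index functions in $\alpha$ (compositions of index functions up to constants); this shows at once that $\modi$ is finite on the relevant range and that $\modi(\alpha)\to0$ as $\alpha\to0$. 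Continuity I would not read off the sandwich but from convexity: as a supremum of functions affine in $\beta:=1/(2\alpha)$, $\modi$ is convex, hence continuous in $\beta$ on the interior of its finiteness domain, thus continuous in $\alpha$. Strict monotonicity follows because $\mod(t)-t^{2}/(2\alpha)$ is positive for small $t$ (as $\rho(t)\to0$) and tends to $0$ at both ends of $(0,\rho^{-1}(2\alpha))$, so the supremum is attained at some $t^{\ast}=t^{\ast}(\alpha)>0$; then for $\alpha_{1}<\alpha_{2}$ one has $\modi(\alpha_{2})\ge\mod(t^{\ast})-(t^{\ast})^{2}/(2\alpha_{2})>\mod(t^{\ast})-(t^{\ast})^{2}/(2\alpha_{1})=\modi(\alpha_{1})$. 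Hence $\modi$ is an index function; Assumption~\ref{ass:2prime} holds with it by assertion~(1), and Theorem~\ref{thm:mod} delivers~\eqref{eq:upperbound2}. The delicate points I anticipate are the bookkeeping for the range of $\tilde\mod$ in the Fenchel step of~(2), and, in~(3), verifying that the supremum is genuinely attained at a positive argument, on which the strict monotonicity rests.
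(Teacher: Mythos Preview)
Your arguments for assertions~(1) and~(2) are essentially identical to the paper's: the same expansion of the Tikhonov defect, the same substitution $s=t^{2}$, and the same recognition of the Fenchel conjugate of $\tilde\mod^{-1}$.

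For assertion~(3), however, your route differs genuinely from the paper's. The paper does \emph{not} analyse the supremum in~\eqref{eq:infpos} directly; instead it feeds the Fenchel representation from~(2) into an abstract result (Corollary~\ref{cor:appendix} in Appendix~\ref{sec:convex-analysis}), after verifying its hypothesis by the substitution $t=\tilde\mod(s^{2})$, which turns $\tilde\mod^{-1}(t)/t$ into $s^{2}/\mod(s)$. Your argument is more elementary and self-contained: the sandwich $\tfrac12\mod(\rho^{-1}(\alpha))\le\modi(\alpha)\le\mod(\rho^{-1}(2\alpha))$ gives finiteness and the limit at zero, convexity in $\beta=1/(2\alpha)$ gives continuity, and attainment of the supremum at a positive $t^{\ast}$ gives strict monotonicity. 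A pleasant by-product is that your proof of~(3) never invokes the concavity of $\tilde\mod$, whereas the paper's route does so implicitly through the representation of~(2); so you in fact establish~(3) under the sole extra hypothesis that $s^{2}/\mod(s)$ is an index function. The paper's approach, on the other hand, ties the three parts into a single Fenchel-dual narrative and delivers the explicit upper bound $\modi(\alpha)\le\varphi^{2}(2\alpha)$ for free. Your anticipated ``delicate point'' about attainment at a positive argument is handled correctly: the integrand is continuous, vanishes at both ends of $[0,\rho^{-1}(2\alpha)]$, and is strictly positive in between, so the supremum is indeed attained in the interior. The only caveat is that the upper sandwich bound requires $2\alpha$ to lie in the range of $\rho$; this is automatic near $\alpha=0$, which is all the index-function property needs, but you might note it explicitly.
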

\begin{proof}
For the first assertion we find that
\begin{align*}
&\frac{1}{\alpha} \left(\Tik(\xd;y) - \Tik(\xa;y) \right)
= \reg(\xd) - \reg(\xa)  -  \frac{1}{2\alpha} \|A \xa  - A \xd \|^2  \\
& \leq \mod(\|A  \xa   - A \xd \| )  -
  \frac{1}{2\alpha} \|A \xa  - A \xd \|^2.
\end{align*}
Setting $t:= \norm{A  \xad   - A \xd}$
yields the function~$\modi$ as stated.

Now suppose that the function~$\tilde \mod$ is a concave index function. Then its inverse is a convex index function, and by the definition of the Fenchel conjugate, see~\eqref{eq:Fconjugate}, we  find
\begin{align*}
&\sup_{t>0} \left[\mod(t) - \frac{t^2}{2 \alpha}\right] =
    \sup_{t>0} \left[\tilde{\mod}(t^2) - \frac{t^2}{2\alpha}\right] \\
    & =
       \frac{1}{2 \alpha} \sup_{s>0} \left[ 2 \alpha s - \tilde{\mod}^{-1}(s) \right] =
  \frac{\tilde{\mod}^{-\ast}(2\alpha)}{2\alpha},
  \end{align*}
  which proves the second assertion. It remains to establish that this function is an index function with property as stated. To this end we aim at applying Corollary~\ref{cor:appendix} with~$f(t):= \tilde\mod^{-1}(t),\ t>0$. We observe, after substituting~$t:= \tilde\mod(s^2)$, that
  $$
  \frac{\tilde\mod^{-1}(t)}{t} = \frac{s^2}{\mod\lr{s}},\quad s>0,
  $$
  which was supposed to be strictly increasing from 0 to $\infty$. Thus Corollary~\ref{cor:appendix} applies, and the proof is complete.
\end{proof}

Under the conditions of item (2) of Proposition~\ref{pro:peter} we can immediately derive a convergence rate for the Bregman distance as error measure.

\begin{prop} \label{pro:stefan}
If the function $\mod$ in (\ref{eq:kindermann}) is such that
$\tilde{\mod}(t):= \mod(\sqrt{t})$ is
a concave index function, and
with an appropriately selected  $\alpha$,
the following convergence rate  holds
\begin{equation} \label{eq:rateStefan}
\Bj{\xi_\alpha^\delta}(\xad;\xd) = \mathcal{O}(\mod(\delta)) \quad
\mbox{as} \quad \delta \to 0.
\end{equation}
\end{prop}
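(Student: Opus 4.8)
The plan is to combine the error estimate of Theorem~\ref{thm:mod} with a judicious parameter choice that exploits the concavity of~$\tilde\mod$ assumed here. Since the conditions of item~(2) of Proposition~\ref{pro:peter} are in force, item~(1) of that proposition already guarantees that Assumption~\ref{ass:2prime} holds with $\modi(\alpha) = \sup_{t>0}\left[\mod(t) - \frac{t^2}{2\alpha}\right]$. The error estimate of Theorem~\ref{thm:mod}, which rests only on the validity of Assumption~\ref{ass:2prime} just furnished, then yields
\[
\Bj{\xi_\alpha^\delta}(\xad;\xd) \leq \frac{\delta^2}{2\alpha} + \modi(\alpha) \quad \mbox{for all} \quad \alpha>0.
\]
As the claimed rate only asks for a single \emph{appropriately selected}~$\alpha=\alpha(\delta)$, it suffices to exhibit one choice driving the right-hand side to order~$\mod(\delta)$.

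Next I would pass to the variable~$u:=t^2$, so that $\modi(\alpha)=\sup_{u>0}\left[\tilde\mod(u)-\frac{u}{2\alpha}\right]$, a concave maximization because~$\tilde\mod$ is concave. The governing idea is the supergradient inequality: for any supergradient~$g>0$ of~$\tilde\mod$ at the point~$u_0:=\delta^2$ one has $\tilde\mod(u)\le\tilde\mod(\delta^2)+g(u-\delta^2)$ for all~$u>0$, whence $\tilde\mod(u)-gu\le\tilde\mod(\delta^2)-g\delta^2$. Choosing $\alpha=\alpha_\delta:=\frac{1}{2g}$ therefore makes the supremum defining~$\modi(\alpha_\delta)$ attained exactly at~$u=\delta^2$, so that
\[
\modi(\alpha_\delta)=\tilde\mod(\delta^2)-g\delta^2=\mod(\delta)-\frac{\delta^2}{2\alpha_\delta}.
\]
Substituting back, the two summands combine to $\frac{\delta^2}{2\alpha_\delta}+\modi(\alpha_\delta)=\mod(\delta)$, which is precisely the asserted rate, indeed with constant one.

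The existence of this choice is the point I would treat most carefully, though the obstacle is mild. As~$\tilde\mod$ is a concave index function with vanishing limit at the origin, it admits finite positive one-sided derivatives at every interior point, so a supergradient~$g=g(\delta)>0$ at~$\delta^2$ exists and~$\alpha_\delta>0$ is well defined; since the slope of a concave function is non-increasing in its argument,~$g$ does not decrease as~$\delta\to0$, so~$\alpha_\delta$ stays bounded and admits a limit, a legitimate parameter choice. Concavity together with $\tilde\mod(0^+)=0$ gives $\frac{\tilde\mod(\delta^2)}{\delta^2}\ge g$, hence $\modi(\alpha_\delta)\ge0$ and finite, so no sign or finiteness issue arises even should~$\modi$ equal~$+\infty$ for other values of~$\alpha$. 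I would close by recording the matching lower bound: taking $t=\delta$ in the defining supremum gives $\modi(\alpha)\ge\mod(\delta)-\frac{\delta^2}{2\alpha}$, hence $\frac{\delta^2}{2\alpha}+\modi(\alpha)\ge\mod(\delta)$ for every~$\alpha>0$. Together with the tangent choice this identifies $\inf_{\alpha>0}\left[\frac{\delta^2}{2\alpha}+\modi(\alpha)\right]=\mod(\delta)$ exactly, showing that the rate~\eqref{eq:rateStefan} is the sharpest the bound~\eqref{eq:bestrate} can deliver and that it cannot be improved by any other selection of~$\alpha$.
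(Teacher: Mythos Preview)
Your proof is correct and follows essentially the same route as the paper: both choose~$\alpha$ so that the convexity/concavity inequality becomes an equality, yielding $\frac{\delta^2}{2\alpha}+\modi(\alpha)=\mod(\delta)$ exactly. The paper phrases this via the Fenchel--Young inequality for the convex function~$\tilde\mod^{-1}$ and selects $2\alpha\in\partial\tilde\mod^{-1}(\tilde\mod(\delta^2))$, whereas you work on the dual side with a supergradient~$g$ of the concave function~$\tilde\mod$ at~$\delta^2$ and set $\alpha=\frac{1}{2g}$; by the inverse-function rule for subdifferentials these choices coincide. Your version is marginally more self-contained (it bypasses item~(2) of Proposition~\ref{pro:peter} and the conjugate notation from Appendix~\ref{sec:convex-analysis}), and your closing lower bound $\inf_{\alpha>0}\bigl[\frac{\delta^2}{2\alpha}+\modi(\alpha)\bigr]=\mod(\delta)$ is a nice addition that the paper does not state.
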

\begin{proof}
In the Fenchel-Young inequality~\eqref{eq:FYI}, used for~$f:= \tilde{\mod}^{-1}$,
assigning~$u:= \tilde{\mod}(\delta^2)$ and~$v:= 2\alpha$ we obtain
\[ \mod(\delta) = \tilde{\mod}(\delta^2) \leq \frac{\delta^2}{2 \alpha} +
\frac{\tilde{\mod}^{-*}(2 \alpha)}{2\alpha} \]
Taking $2 \alpha \in \partial \tilde{\mod}^{-1}(\delta^2)$, which
exists by continuity of $\tilde{\mod}^{-1}$, yields
equality in the Fenchel-Young and in the above inequality, thus,
with such a choice and by \eqref{eq:upperbound2}
\[  \Bj{\xi_\alpha^\delta}(\xad;\xd) \leq
    \frac{\delta^2}{2\alpha} + \frac{\tilde{\mod}^{-*}(2 \alpha)}{2\alpha} = \mod(\delta).\]
\end{proof}

We highlight the previous findings in case that the function~$\mod$ in~\eqref{eq:kindermann} is a monomial.
\begin{example}\label{xmpl:monomial}
  Let us prototypically consider the case that the function~$\mod$ is
  of power type, i.e.,\ $\mod(t):= t^{\mu},\ t>0$ for some
  $0 < \mu <\infty$. Then the function~$\tilde\mod$
  is~$\tilde\mod(t)=t^{\mu/2}$.  This function is concave
  whenever~$0< \mu \leq 2$. In that range also the
  quotients~$s^2/\mod(s),\ s>0$ are strictly increasing.  For~$\mu>2$
  the function~$\modi$ is infinite for all~$\alpha >0$ and for $\mu=2$ it is a positive constant. For $0<\mu<2$, however, $\modi$ is an index function.

  Namely, the inverse of~$\tilde\mod$
  equals~$\tilde\mod^{-1}(t) = t^{2/\mu},\ t > 0$. By using the simple identity that~$(cf)^\ast(t) = c f^\ast(t/c),\ t>0$, for a convex function~$f$ and $c>0$ we see that the Fenchel conjugate
  function is for all $0<\mu<2$
  $$
  \tilde\mod^{-\ast}(t) = \frac{2-\mu}{\mu}\lr{\mu t/2}^{2/(2-\mu)},\ t > 0.
  $$
  Then the quotient
$$
\frac{\tilde{\mod}^{-\ast}(2\alpha)}{2\alpha} = \frac{2 - \mu}{2}\lr{\mu\alpha}^{\frac{\mu}{2-\mu}},\quad \alpha > 0,
$$
is a strictly increasing index function as predicted by the
proposition.   This function is sub-linear for~$\mu/(2-\mu)\leq 1$, i.e.,\
for~$0< \mu \leq 1$, and hence may serve
as a bound in Assumption~\ref{ass:conv}, including the benchmark
case~$\mod(t) = ct,\ t>0$, in which case the corresponding
function~$\modi$ is also linear.
\end{example}

\begin{rem} \label{rem:Flemming}
We know from Proposition~\ref{prop:nosuper} that in the non-singular case the function~$\mod$ is at most linear, i.e.,\ the function~$\mod(s)/s$ is bounded away from zero. In particular this holds for concave index functions. In this particular case the function~$s/\mod(s)$ is non-decreasing, and hence the function  $s(s/\mod(s))=s^2/\mod(s)$ is an index function. Thus item~\eqref{it:3} of Proposition~\ref{pro:peter} applies and yields that Assumption~\ref{ass:2prime} holds. Hence Theorem~\ref{thm:mod} applies and gives a convergence rate.
\smallskip

Note that \eqref{eq:kindermann} with the function~$\mod(t)=Rt$, has benchmark character.
Indeed, if~(\ref{eq:kindermann}) holds with an index function~$\mod$ obeying $0<R=\lim_{\alpha \to 0} \mod(\alpha)/\alpha\leq R< \infty$, then this implies the variational inequality
$$\reg(\xd) - \reg(x) \leq R \norm{A\xd - Ax} \quad \mbox{for all} \quad  x\in X.$$
This was shown to hold if~$\range(A^*) \cap \partial J(\xd)\not =\emptyset$, cf. Eq~\eqref{eq:vi-benchmark}.
If such linear bound fails then by the method of \emph{approximate variational source conditions} (cf.~\cite{FleHof10} and more comprehensively \cite{Fl12}) one can consider the strictly positive and decreasing \emph{distance function}
$$d(R) := \sup_{x\in X}\set{\reg(\xd) - \reg(x) - R  \norm{A\xd - Ax}},\quad R>0.$$
 We find that~$\lim_{R \to \infty} d(R)=0$, and  the decay rate to zero as $R \to \infty$ measures the degree of violation of the benchmark variational inequality~(\ref{eq:vi-benchmark}). Together with \cite[Lemma~3.2]{Fl17} it was proven that then a variational inequality of type (\ref{eq:kindermann}) holds, such that
\begin{equation} \label{eq:distphi}
\mod(\alpha)= 2d(\Theta^{-1}(\alpha)), \quad \mbox{where} \quad \Theta(R):=d(R)/R.
\end{equation}
It should be noted that this function $\mod$ is a sub-linear index function such that the quotient function $\mod(\alpha)/\alpha$ is non-increasing for all \linebreak $\alpha>0$. Hence, the convergence rate (\ref{eq:rateStefan}) also applies
for the function~$\mod$ from~(\ref{eq:distphi}).
\end{rem}

\section{Examples}
\label{sec:examples}
Here we shall highlight the applicability of the main results in special situations. We start with the standard penalty in a Hilbert space context and then analyze other penalties as these are used in specific applications.
\subsection{Quadratic Tikhonov regularization in Hilbert spaces} \label{subsec:quadratic}
  Suppose we are in the classical context of Tikhonov regularization in Hilbert spaces~$X$ and $Y$, where the penalty is given as~$\reg(x) := \frac 1 2
  \norm{x}{}^{2},\ x\in X$. 
%
In this case, which has been comprehensively discussed in the literature (cf., e.g.,~\cite[Chap.~5]{EHN96} and \cite{AlbElb16,AndElb15,FHM11}), we can explicitly calculate the terms under
consideration.

First, let~$\ga(\lambda) := 1/(\alpha + \lambda)$ be the filter from
Tikhonov regularization and its companion~$\ra(\lambda) =
\alpha/(\alpha + \lambda)$. With these short-hands, we see that~$\xa -
\xp = \ra(\asta) \xp$, and also~$A(\xa - \xp) = A\ra(\asta)\xp$.
This yields
\begin{equation}\label{eq:axa-axp}
  \frac{\norm{A\xa - A\xp}{}^{2} }{2\alpha} = \frac{\norm{A\ra(\asta)\xp}{}^{2}}{2\alpha}
 = \frac{ \norm{\ra(\asta)\lr{\asta}^{1/2}\xp}{}^{2}}{2\alpha}.
\end{equation}
We also see that
\begin{align*}
  T_{\alpha}(\xa;A\xp) &=  \frac{1} 2 \lr{
                         \norm{\ra(\asta)\lr{\asta}^{1/2}\xp}^{2}
                         + \alpha \norm{\xa}^{2}}\\
& = \frac 1 2 \int \left[ \frac{\alpha^{2}\lambda}{(\alpha +\lambda)^{2}}  + \frac
  {\alpha\lambda^{2}}{(\alpha + \lambda)^{2}}\right]dE_\lambda
  \|\xd\|^2\\
& = \frac 1 2 \int \frac{\alpha\lambda}{(\alpha + \lambda)} dE_\lambda  \|\xd\|^2,
\end{align*}
which in turn yields
\begin{equation}
  \label{eq:kindermann-functional}
  \begin{split}
\frac{1}{\alpha} \left(\Tik(\xd;y) - \Tik(\xa;y) \right) & = \frac 1 2
                                                           \int
\frac{ \alpha  }{(\lambda +\alpha)}   dE_\lambda \|\xd\|^2\\
& = \frac 1 2 \norm{\ra^{1/2}(\asta)\xp}^{2}.
  \end{split}
\end{equation}
Finally, we bound
\begin{equation}\label{eq:diff-J}
  \begin{split}
  \reg(\xp) - \reg(\xa) & = \frac 1  2 \lr{\norm{\xp}{}^{2} - \norm{\xa}{}^{2}}
= \frac 1 2 \scalar{\xp - \xa}{\xp + \xa}\\
& = \frac 1 2 \scalar{\ra(\asta) \xp}{\lr{I +  \lr{\alpha +
  \asta}^{-1}\asta}\xp}\\
& \leq \norm{\ra^{1/2}(\asta)\xp}^{2}.
  \end{split}
\end{equation}
We observe that the right-hand sides in~(\ref{eq:kindermann-functional})
and~(\ref{eq:diff-J}) differ by a factor $\half$,
as predicted in Proposition~\ref{thm:equivalence}.

In the classical setup of Tikhonov regularization, a regularity condition
is usually imposed by a source-condition.
Thus, let us now assume that the element~$\xp$ obeys a source-wise
representation~
\begin{equation}\label{eq:sc} \xp = \varphi(\asta)v,\qquad  \norm{v}\leq 1,\end{equation}
for an  for an index function~$\varphi$.
Then the
estimate~(\ref{eq:diff-J}) reduces to bounding
\begin{equation}
  \label{eq:diff-J-phi-bound}
  \begin{split}
\norm{\ra^{1/2}(\asta)\xp}^{2} & \leq
\norm{\ra^{1/2}(\asta)\varphi(\asta)}^{2}\\
&\leq  \norm{\ra(\asta)\varphi^{2}(\asta)},
  \end{split}
\end{equation}
where we used the estimate~$\norm{H^{1/2}}{}\leq \norm{H}^{1/2}$ for a
self-adjoint non-negative operator~$H$.

Then, if the function~$\varphi^{2}$ is sub-linear, we find that
$$
\norm{\ra(\asta)\varphi^{2}(\asta)}\leq \varphi^{2}(\alpha).
$$
Hence, in the notation of \cite{MaPe03},
$\varphi^{2}$ is a
  qualification for Tikhonov regularization, and Assumption~\ref{ass:conv}
  holds true with the index function $\modi(\alpha) = \varphi^{2}(\alpha)$.
In particular, the rate~\eqref{eq:bestrate}, which is obtained by equilibrating both summands by letting the parameter~$\alpha_\ast$ be given as solution to the equation $\Theta(\alpha_\ast) = \delta/\sqrt 2$,  yields the convergence rate
$$\norm{\xd - x_{\alpha_\ast}^\delta} \leq 2 \varphi\lr{\Theta^{-1}(\delta/\sqrt 2)},
$$
which is known to be optimal in the ``low smoothness'' case, i.e., $\xd \in \range(A^*)$.

Under the same  condition on $\varphi$  we can also bound the right-hand side
in~(\ref{eq:axa-axp}) as
$$
\frac{ \norm{\ra(\asta)\lr{\asta}^{1/2}\xp}{}^{2}}{2\alpha}\leq
\frac{\lr{\sqrt{\alpha}\varphi(\alpha)}^{2}}{2\alpha} = \frac 1 2 \varphi^{2}(\alpha),
$$
which verifies \eqref{eq:residual-bound}.

We finally turn to discussing the maximal rate at which the
function~$\modi$ may tend to zero as~$\alpha\to 0$, provided
that~$\xd\neq 0$.
Considering the ratio~$\modi(\alpha)/\alpha$ we find
\begin{align*}
  \frac{\modi(\alpha)}\alpha  &\geq  \frac{
                   \norm{\ra(\asta)\lr{\asta}^{1/2}\xp}{}^{2}}{2\alpha^{2}} = \frac 1 {2\alpha^{2}}\int \frac{\alpha^{2}\lambda}{(\lambda +
  \alpha)^{2}} dE_\lambda \|\xd\|^2\\
& \geq \frac 1 8 \int_{\lambda \geq \alpha} \frac 1 \lambda dE_\lambda
  \|\xd\|^2= \frac 1 8 \norm{\chi_{[\alpha,\infty)}(\asta) \lr{\asta}^{-1/2}\xd}^{2}.
\end{align*}
This shows that either~$\xd \in \domain\lr{\asta}^{-1/2}$, and hence
that~$\xd \in\range\lr{A^{\ast}}$, in which case the right-hand side
is bounded away from zero (if~$\xd\neq 0$), or we have that~$\xd \not\in \range\lr{A^{\ast}}$,
and the right-hand side diverges. Hence, for nonzero $\xd$
the best attainable rate
near zero of the function~$\modi$ is linear as also predicted in Proposition~\ref{prop:dich}.

\subsection{ROF-Filter}
We consider the celebrated ROF-filter in image processing \cite{RuOsFA92}:
Let $\yd \in L^2(\R^2)$ represent an noisy image. Then a filtered version $x \in L^2(\R^2)\cap BV(\R^2)$
is computed by minimizing the Tikhonov functional
\[ \Tik(x;\yd) = \frac{1}{2}\norm{x- \yd}_{L^2(\R^2)}^2 + \alpha |x|_{TV}, \]
where $\reg(x) := |x|_{TV}$ denotes the total variation of $x$ on $\R^2$.
Obviously, this can be put into our framework with $A$ being the embedding
operator from $BV(\R^2)$ to $L^2(\R^2)$.

For some special cases, where $\xd$ is the characteristic function of simple
geometric shapes, the minimizers can be computed explicitly.
Denote by $B_{x,R}$ a ball with center $x$ and radius $R$.
Consider first the case when $\xd$ is the characteristic function of a
ball $B_{0,R}$:
\[ \xd(s)   = \chi_{B_{0,R}}(s) := \begin{cases} 1 & \text{if } \|s\|_{\R^2} \leq 1, \\
0 & \text{else}. \end{cases} \]
The minimizer $\xa$ of $\Tik(.;y)$ with exact data is given by, e.g., \cite{Me01}
\[ \xa = \max\{1- \tfrac{2 \alpha}{R},0\}  \chi_{B_{0,R}}(s).  \]
Calculating the index function in Assumption~\ref{ass:conv} is now a simple task
as $|\chi_{B_{0,R}}|_{TV} = 2 \pi R$
\begin{align*} \reg(\xd) - \reg(\xa) &=  \modi(\alpha) = 2 \pi R\left(1 - \max\{1- \tfrac{2 \alpha}{R},0\} \right) \\
&=2 \pi R \min\{\frac{2 \alpha}{R},1\}
 = 4 \pi \alpha \quad \mbox{ if } \alpha < R/2.
\end{align*}
For a comparison, we may compute the Bregman distances. For the asymptotically interesting
case, $\alpha < \frac{R}{2}$ we find that
\[ \Bj{\xi_\alpha}(\xa;\xd) = \Bj{\xi_\alpha}(\xd;\xa) = 0 \qquad \forall \alpha < \frac{R}{2}, \]
which yields a trivial rate, but of course, does not violate the
upper bound $\mod(\alpha)$ in \eqref{eq:upperbound2} for $\delta= 0$.
The squared norm of the residual  for $\alpha < \frac{R}{2}$
is given by
\[ \norm{A \xa - y}^2 =   4 \pi  \alpha^2, \]
hence, \eqref{eq:residual-bound} clearly holds.
We also observe that a variational inequality of the form~\eqref{eq:vi-Phi}, or~\eqref{eq:kindermann} below,
holds with $\mod(s) \sim s$.

For noisy data, $\xad$ cannot be calculated analytically,
but our results
suggest for such $\xd$ a suitable parameter choice of the form $\alpha = \delta$, which provides a convergence
 rate
\[ \Bj{\xi_\alpha^\delta}(\xad;\xd) \leq (4 \pi +\frac{1}{2} ) \delta. \]

A less simple situation appears when the exact solution is
the characteristic function of the unit square
\[ \xd(s) = \chi_{[0,1]^2}(s) = \begin{cases}1 & \text{ if } s \in [0,1]^2 \\
0 &\text{else} \end{cases}. \]
An explicit solution is known here as well \cite{Chetal10}.
For $R>0$ define the rounded square
\[ C_R := \bigcup_{x: B_{x,R} \subset [0,1]^2} B_{x,R},  \]
which has the shape of a square with the four
corners cut off and replaced by circular arcs of radius $R$ that
meet tangentially the edges of the square.
The solution satisfies $0\leq \xa \leq 1$ and
can be characterized by the level sets: for $s \in [0,1]$
\begin{align*}
    \{\xa > s\} = \begin{cases} \emptyset & \text{ if }
    s \geq 1-\frac{\alpha}{R^*} \\
    C_{\frac{\alpha}{1-s}} & \text{ if } s
    \leq 1-\frac{\alpha}{R^*} \end{cases}.
\end{align*}
Here $R^*$ is a limiting value, which can be computed explicitly. Since we are interested in the
asymptotics $\alpha \to 0$, we generally impose
the condition $\alpha \leq R^*$ as otherwise
$\xa = 0$.
The index function $\modi$ can now be calculated
by the coarea formula
\begin{align*}
\reg(\xd) - \reg(\xa) = \modi(\alpha) =
4 - \int_0^{1-\frac{\alpha}{R^*} }
|C_{\frac{\alpha}{1-s}}|_{TV} ds
\end{align*}
The value of $|C_{R}|_{TV}$ is its perimeter
and can be calculated by elementary geometry
to $|C_{R}|_{TV} = 4 - 2(4-\pi)R$. Thus, evaluating the integral,  we obtain
\begin{align*}
\modi(\alpha)   = \frac{4}{R^*} \alpha +
2(4-\pi) \alpha\lr{\log\lr{\frac{R^*}{\alpha}}} \qquad \alpha \leq R^*.
\end{align*}
Thus, in this case,
\[ \modi(\alpha)\sim \alpha \log(1/\alpha)
\qquad \text{ as } \alpha \to 0. \]
The residual norm is given by
\begin{align*}
    \|A \xa - y\|^2 &=
    \|\xa -\xd\|_{L^2}^2 \\
    &=
    \frac{\alpha^2}{{R^*}^2} +  2(4 -  \pi) \alpha^2\left(\log\lr{\frac{R^*}{\alpha}}\right)
    \qquad \alpha < R^*.
\end{align*}
Obviously, the bound \eqref{eq:residual-bound} is satisfied.
The approximation error in the Bregman distance (with our choice of the subgradient element)
is hence given by
\begin{align*} \Bj{\xi_\alpha}(\xa;\xp)
&= \reg(\xd) - \reg(\xad) -  \frac{1}{\alpha} \|A \xa - y\|^2
 = \frac{3 }{{R^*}^2} \alpha   \qquad \alpha < R^*.
\end{align*}

We observe that, for the square,
the parameter choice that minimizes the upper bound
\eqref{eq:upperbound2} differs from that for the ball
as we have that $\alpha \sim C \frac{\delta}{(\log(1/\delta))^\frac{1}{2}}$,
which highlights the (well-known) dependence of the parameter choice on the
regularity of the exact solution.

Note also, that the decay of the
Bregman distance $\Bj{\xi_\alpha}(\xa;\xp)$ alone
does not suit well as a measure of regularity for $\xd$ since the
logarithmic factor that appears in the condition in
Assumption~\ref{ass:conv}  is not observed for this Bregman distance.

\subsection{On $\ell^1$-regularization when sparsity is slightly missing}

We consider the \emph{injective} continuous linear operator $A\colon \ell^1\to \ell^2$ and the penalty~$J(x):=\|x\|_{\ell^1} = \norm{x}_1$. Notice that $\ell^1=c_0^*$, it thus has a predual, and we assume that  $A$ is weak$^*$-to-weak continuous, and  the penalty~$J$ is stabilizing in this sense (see also~\cite{FleGer17}).

The crucial additional assumption on the operator~$A$  is that the unit elements $e^{(k)}$ with $e_k^{(k)}=1$  and $e_i^{(k)}=0$ for $i \not=k$, satisfy  source conditions $e^{(k)}=A^*f^{(k)},\;f^{(k)} \in Y$ for all $k \in \mathbb{N}$ .
Under these assumptions, and with~$\xd =(x_k^\dagger)_{k \in \mathbb{N}}\in X$ from (\ref{eq:opeq}), we assign the function
\begin{equation} \label{eq:BFHPhi}
\mod(t)=2 \inf \limits_{n \in \mathbb{N}} \left(\sum \limits_{k=n+1}^\infty |x_k^\dagger| + t\,\sum \limits_{k=1}^n \|f^{(k)}\|_Y  \right),\quad t>0.
\end{equation}
Notice that the function~$\mod$ from~\eqref{eq:BFHPhi} is a concave index function. It was shown in~\cite{BurFleHof13} that then a variational
inequality of the form
\begin{equation*}
\|x-\xd\|_X \le  \|x\|_X-\|\xd\|_X + \mod\lr{\norm{A\xd - Ax}} \quad \mbox{for all} \quad  x\in X
\end{equation*}
holds true. This immediately implies the validity of the condition (\ref{eq:vi-Phi}) with the same index function $\mod$, and an application of~item (3) of Proposition~\ref{pro:peter} shows that the error estimate~(\ref{eq:upperbound2}) is valid for that $\mod$.

The behavior of the index function $\mod$ from (\ref{eq:BFHPhi}) essentially depends on the decay rate of the tail of $x_k^\dagger \to 0$ of the solution element~$\xd$. When sparsity is (slightly) missing, then the function~$\mod$ will be strictly concave. However, if~$\xd$ is sparse, i.e.,\ $x_k^\dagger=0 \quad \mbox{for} \quad k>n_{max}$, then the function~$\mod$ reduces to the linear function
$$
\mod(t)= \left(\sum \limits_{k=1}^{n_{max}} \|f^{(k)}\|_Y \right)\,t, \quad t >0.
$$
As Example~\ref{xmpl:monomial} highlights, this results in a linear companion  function~$\modi$. Thus Theorem~\ref{thm:mod} applies, and the choice of~$\alpha \sim \delta$ yields a rate for the Bregman distance $\Bj{\xi_\alpha^\delta}(\xad;\xd) = \mathcal O(\delta)$ as~$\delta\to 0$ in the sparse case.

\section{Outlook to higher order rates}
\label{sec:outlook}
There might be a way for overcoming the limitation of sub-linear functions~$\modi$ in the assumptions~\ref{ass:conv} or~\ref{ass:2prime}. The underlying observation for this is the identity
\begin{equation} \label{eq:noiseminus}
B_{\xi_{\alpha}}\lr{x_{\alpha};\xd} = \frac{2}{\alpha}(T_\alpha(\xd,y)-T_\alpha(\xa,y))-(J(\xd)-J(\xa).
\end{equation}
The right-hand side above is again entirely based on noise-free quantities, and its decay could be used as smoothness assumption.

If one could prove that there were an inequality of the form
\begin{equation} \label{eq:noiseplus}
B_{\xi_{\alpha}^\delta}\lr{x_{\alpha}^\delta;\xd} \le C_1\,B_{\xi_{\alpha}}\lr{x_{\alpha};\xd}+ C_2\,\delta^2/\alpha,\quad \alpha>0,
\end{equation}
with positive constants $C_1$ and $C_2$, then this might open the pathway for higher order rates. Indeed, in Hilbert space~$X$ and for the standard penalty~$\reg(x) := \tfrac 1 2 \norm{x}_X^2$, cf. Section~\ref{subsec:quadratic}, we find that  $B_{\xi_{\alpha}^\delta}\lr{x_{\alpha}^\delta;\xd}=\|\xad-\xd\|_X^2$, and hence that the inequality~(\ref{eq:noiseplus})
is satisfied with $C_1=2$ and $C_2=1$. Moreover,
one can easily verify that
$$
\frac{2}{\alpha}(T_\alpha(\xd,y)-T_\alpha(\xa,y))-(J(\xd)-J(\xa) = \frac 1 2 \norm{\ra(\asta)\xd}^2,
$$
with~$\ra(\asta)= \alpha\lr{\alpha + \asta}^{-1}$, being the (squared) residual for (standard linear) Tikhonov regularization. This squared residual is known to decay  of order up to~$\mathcal O(\alpha^2)$ as $\alpha\to 0$, which then allows for higher rates
$B_{\xi_{\alpha}^\delta}\lr{x_{\alpha}^\delta;\xd}=\mathcal{O}(\delta^{4/3})$, attained under the limiting source condition $\xd=A^*Aw,\;w \in X$, and for the a priori parameter choice $\alpha \sim \delta^{2/3}$.
It is thus interesting to see whether and under which additional assumptions an inequality of the form~\eqref{eq:noiseplus} holds.

\appendix
\section{Proofs}
\label{sec:proofs}
Let us  define the noisy and exact residuals, and the noise term as
\begin{equation}\label{nr}
\rad:= A\xad -\yd, \ \ra := A\xa - y = A (\xa - \xd)\quad
\text{and}\ \Delta:= \yd - y.
\end{equation}
Notice that all quantities~$\ra,\rad$ as well as~$\Delta$ belong to
the Hilbert space~$H$.
The subsequent analysis will be based on the optimality conditions (recall our convention on the choice of
$\xi_\alpha^\delta \in \partial J(\xad)$ and
$\xi_\alpha \in \partial J(\xa)$)
\begin{align}
\langle A \xad - \yd,A w\rangle + \alpha \langle \xi_\alpha^\delta, w \rangle
  & = 0,  \qquad \forall w \in X,  \label{opc1} \\
\langle A \xa - A \xd,A w\rangle + \alpha \langle \xi_\alpha, w \rangle
  & = 0,  \qquad \forall w \in X.  \label{opc2}
\end{align}
In particular, the optimality conditions lead to the following
formulas, by
\begin{enumerate}
\item subtracting  \eqref{opc1} from \eqref{opc2} using $w = \xa
  -\xd$,
\item  using \eqref{opc1} with $w = \xa -\xad $, and
\item using \eqref{opc2} with $w = \xa -\xd $,
\end{enumerate}
respectively:
\begin{align}
\langle  \xi_\alpha- \xi_\alpha^\delta,\xd-\xa \rangle &=  - \frac 1 \alpha\langle \rad  - \ra, \ra \rangle  \label{est2} \\
-\langle \xi_\alpha^\delta,\xa -\xad  \rangle  &= \frac{1}{\alpha}
\langle \rad, \ra -\rad  - \Delta) \rangle  \label{est2b}  \\
 -   \langle \xi_\alpha, \xd -\xa \rangle  &= -\frac{1}{\alpha} \|\ra\|^2.   \label{est2c}
\end{align}
The following bounds will be the key for proving Theorem~\ref{thm:mod}.
\begin{lem}\label{lem:distance-bounds}
  Under Assumption~\ref{ass:2prime} we have
  \begin{enumerate}
  \item\label{it:bxaxp} $ \Bj{\xi_\alpha}(\xa;\xp) \leq \modi(\alpha) - \frac 1
    {2\alpha}\norm{\ra}^{2}$.
  \item $\Bj{\xi_\alpha^\delta}(\xad;\xa) \leq \frac{\delta^{2}}{2\alpha} - \frac 1
    {2\alpha}\norm{\ra}^{2} + \frac 1 \alpha \scalar{\rad}{\ra} $
  \end{enumerate}
\end{lem}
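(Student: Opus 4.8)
The plan is to read off both estimates from the definition~\eqref{eq:bregman} of the Bregman distance, to substitute the duality-product identities already recorded in~\eqref{est2c} and~\eqref{est2b}, and then to control the remaining penalty differences: by Assumption~\ref{ass:2prime} for the first bound and by the minimizing property of the noise-free solution~$\xa$ for the second. It is convenient to note that, by the computation following~\eqref{eq:Modified}, Assumption~\ref{ass:2prime} is nothing but $\reg(\xp)-\reg(\xa)-\tfrac{1}{2\alpha}\norm{\ra}^2 \le \modi(\alpha)$.

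For the first bound I would expand
\[ \Bj{\xi_\alpha}(\xa;\xp) = \reg(\xp) - \reg(\xa) - \scalar{\xi_\alpha}{\xp-\xa}, \]
and replace the last term using~\eqref{est2c}, which gives $\scalar{\xi_\alpha}{\xp-\xa} = \tfrac{1}{\alpha}\norm{\ra}^2$. Splitting this as $\tfrac{1}{2\alpha}\norm{\ra}^2+\tfrac{1}{2\alpha}\norm{\ra}^2$ and absorbing one of the halves into the penalty difference through Assumption~\ref{ass:2prime} immediately yields $\Bj{\xi_\alpha}(\xa;\xp)\le \modi(\alpha)-\tfrac{1}{2\alpha}\norm{\ra}^2$. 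This step is routine.

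The second bound is the crux. I would start from
\[ \Bj{\xi_\alpha^\delta}(\xad;\xa) = \reg(\xa) - \reg(\xad) - \scalar{\xi_\alpha^\delta}{\xa-\xad} \]
and rewrite the duality product by~\eqref{est2b} as $\tfrac{1}{\alpha}\scalar{\rad}{\ra-\rad-\Delta}$. It then remains to bound the penalty difference $\reg(\xa)-\reg(\xad)$ from above, and getting the direction of this estimate right is the main obstacle: the minimizing property of~$\xad$ controls $\reg(\xa)-\reg(\xad)$ only from below and is of no use here. Instead I would invoke the minimizing property of the noise-free minimizer~$\xa$, tested against~$\xad$, that is $\Tik(\xa;y)\le\Tik(\xad;y)$; writing $A\xad-y=\rad+\Delta$ this delivers the upper bound
\[ \reg(\xa)-\reg(\xad) \le \tfrac{1}{2\alpha}\norm{\rad+\Delta}^2 - \tfrac{1}{2\alpha}\norm{\ra}^2. \]

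Substituting this into the expression above and expanding the quadratic terms, I expect the two occurrences of $\scalar{\rad}{\Delta}$ to cancel and a harmless summand $-\tfrac{1}{2\alpha}\norm{\rad}^2\le 0$ to appear, which may simply be discarded. What is left is $\tfrac{1}{2\alpha}\norm{\Delta}^2 - \tfrac{1}{2\alpha}\norm{\ra}^2 + \tfrac{1}{\alpha}\scalar{\rad}{\ra}$, and the noise bound $\norm{\Delta}\le\delta$ from~\eqref{eq:noise} then gives precisely the asserted estimate. The only delicate part is the bookkeeping of the cross terms; the entire conceptual content lies in choosing the correct minimizing comparison in the preceding step.
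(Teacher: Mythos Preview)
Your proposal is correct and follows essentially the same route as the paper: for item~(1) you expand the Bregman distance, substitute~\eqref{est2c}, and invoke Assumption~\ref{ass:2prime}; for item~(2) you substitute~\eqref{est2b}, bound $\reg(\xa)-\reg(\xad)$ via the minimizing property of the \emph{noise-free} $\xa$ tested against $\xad$ (which is exactly the paper's choice), and the expected cancellation of the $\scalar{\rad}{\Delta}$ cross terms together with dropping $-\tfrac{1}{2\alpha}\norm{\rad}^2$ matches the paper's algebra verbatim.
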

\begin{proof}
  Using the optimality condition \eqref{est2c} we find that
\begin{equation*}\label{ali:after11}
  \begin{split}
 &\Bj{\xi_\alpha}(\xa;\xd) +   \frac{1}{2\alpha} \|\ra\|^2 \\
  & \qquad = \reg(\xd) - \reg(\xa) - \langle \xi_\alpha,\xd-\xa \rangle
  + \frac{1}{2\alpha} \|\ra\|^2 \\
  & \qquad = \reg(\xd) - \reg(\xa) -   \frac{1}{2\alpha} \|\ra\|^2 \\
  & \qquad = \frac{1}{\alpha} \left(\Tik(\xd;y) - \Tik(\xa;y) \right)
  \leq \modi(\alpha),
  \end{split}
  \end{equation*}
which proves the first assertion.

For proving the second assertion we use the definition
of~$\Bj{\xi_\alpha^\delta}(\xad;\xa)$ and~(\ref{est2b}) to find
\begin{equation}
  \label{eq:bxadxd1}
\Bj{\xi_\alpha^\delta}(\xad;\xa) 
=  \reg(\xa) - \reg(\xad) + \frac 1 \alpha \langle \rad, \ra -\rad  - \Delta) \rangle.
\end{equation}
The minimizing property of~$\xa$ also yields
\begin{align*}
\reg(\xa) - \reg(\xad) &\leq \frac 1{2 \alpha} \left[ \norm{A \xad - y}^{2} -
  \norm{A \xa - y}^{2}\right] \\
&= \frac 1{2 \alpha}\left[ \norm{\rad + \Delta}^{2} - \norm{\ra}^{2}\right]
\end{align*}
We rewrite
$$
\langle \rad, \ra -\rad  - \Delta) \rangle = - \norm{\rad}^{2} +
\scalar{\rad}{\ra - \Delta}.
$$
Using this and plugging the above estimate into~(\ref{eq:bxadxd1}) gives
\begin{align*}
  \Bj{\xi_\alpha^\delta}(\xad;\xa) &\leq \frac 1 {2\alpha} \left[ \norm{\rad + \Delta}^{2}
  - \norm{\ra}^{2}\right] - \frac 1 \alpha  \norm{\rad}^{2}
+ \frac 1 \alpha \scalar{\rad}{\ra - \Delta}\\
& =  \frac 1 {2\alpha} \norm{\Delta}^{2}
-\frac 1 {2\alpha} \norm{\ra}^{2}-\frac 1 {2\alpha} \norm{\rad}^{2} +
  \frac 1 \alpha \scalar{\rad}{\ra}\\
& \leq  \frac {\delta^{2}} {2\alpha} -\frac 1 {2\alpha} \norm{\ra}^{2}
+\frac 1 \alpha \scalar{\rad}{\ra},
\end{align*}
completing the proof of the second assertion and of the lemma.
\end{proof}
We are now in a position to give detailed proofs of the results in
Section~\ref{sec:main}.

\begin{proof}[Proof of Proposition~\ref{thm:equivalence}]
From item \eqref{it:bxaxp} of Lemma~\ref{lem:distance-bounds} we know
that, for all $\alpha>0$,
$$
\frac{\norm{A \xa - y}{}^{2}}{2\alpha} =
\frac{\norm{\ra}^{2}}{2\alpha}\leq \modi(\alpha).
$$
Therefore, Assumption~\ref{ass:2prime} implies that
\begin{align*}
  \reg(\xp) - \reg(\xa) &= \reg(\xp) - \reg(\xa) - \frac{\norm{A \xa -
                    y}{}^{2}}{2\alpha} +
                    \frac{\norm{\ra}^{2}}{2\alpha}\\
&\leq \modi(\alpha) + \frac{\norm{\ra}^{2}}{2\alpha} \leq 2 \modi(\alpha),
\end{align*}
which completes the proof.
\end{proof}

\begin{proof}[Proof of Proposition~\ref{prop:dich}]
First, if~$\inf_{x\in X}\reg(x) = \reg(\xd)$ then
$$
\alpha J(\xd)=\Tik(\xd;A\xd) \ge \Tik(\xa;A\xd)\geq \alpha \reg(\xa) \geq \alpha \reg(\xd),
$$
and for all $\alpha>0$ we have $J(\xa)= J(\xd)$.  This allows us to prove the assertion in the first (singular) case.
Otherwise, assume to the contrary that there is an index function~$\modi$, and for a decreasing sequence of regularization parameters $(\alpha_k)_k$ with
$\lim_{k\to\infty} \alpha_k = 0$
the limit condition
\[ \lim_{k\to \infty} \frac{\modi(\alpha_k)}{\alpha_k} = 0 \]
holds.  Consequently, we find from~\eqref{eq:residual-bound} that
$\lim_{k\to \infty} \frac{1}{\alpha_k}\norm{A x_{\alpha_k}-y} = 0$, with~$y=A \xd$.

Due to the optimality condition~\eqref{opc2} for $\xa$ we have that
\[  A^*(Ax_{\alpha_k}-y)+\alpha_k \xi_{\alpha_k}=0 \quad \mbox{for some} \quad \xi_{\alpha_k} \in \partial \reg(x_{\alpha_k}) \subset X^*. \]
This yields
\[ \xi_{\alpha_k} = -\frac1{\alpha_k} A^*(Ax_{\alpha_k}-y). \]
Since $A^*:H \to X^*$ is a bounded linear operator, we get
\[ \|\xi_{\alpha_k}\|_{X^*} \le \frac1{\alpha_k}\|A^*\|_{\mathcal{L}(H,X^*)}\,\|Ax_{\alpha_k}-y\|\to 0\quad
\text{ as } \quad k\to\infty. \]
Since $\xi_{\alpha_k}\in \partial \reg(x_{\alpha_k})$,
and after taking the limit, we find for all $y \in H$ that
\[ \reg(y) \ge \limsup_{k\to\infty}\left\{ \reg(x_{\alpha_k})+\left\langle \xi_{\alpha_k},y-x_{\alpha_k}\right\rangle\right\} = \limsup_{k\to\infty} \reg(x_{\alpha_k}) = \reg(\xd), \]
where we used  Assumption~\ref{ass:conv} and $\lim_{k\to\infty} \modi(\alpha_k)= 0$. Thus, we conclude that~$\inf_{x\in X}\reg(x) = \reg(\xd)$. This contradicts the assumption, and hence the function~$\modi$ cannot decrease to zero super-linearly as $\alpha\to0$.
\end{proof}

\begin{proof}
  [Proof of Theorem~\ref{thm:mod}]
Here we recall the three-point identity (see, e.g.,  \cite{ScKaHoKa12}). For $u,v,w\in X$ and $\xi \in \partial \reg(w)$, $\eta \in \partial \reg(v)$,
we have that       
\begin{align}
\Bj{\xi}(w;u) =  \Bj{\eta}(v;u) + \Bj{\xi}(w;v) +
\langle \eta - \xi,u -v \rangle, \label{bregone}
\end{align}
and this specifies with~$u:= \xd,\ v:= \xa$, and~$w:= \xad$ to
\begin{align}
\Bj{\xi_\alpha^\delta}(\xad;\xd) &=  \Bj{\xi_\alpha}(\xa;\xd) + \Bj{\xi_\alpha^\delta}(\xad;\xa) + \langle \xi_\alpha - \xi_\alpha^\delta,\xd -\xa \rangle, \label{est1}
\end{align}
Inserting \eqref{est2} into \eqref{est1} gives
\begin{equation*} 
\begin{split}
\Bj{\xi_\alpha^\delta}(\xad;\xd) &= \Bj{\xi_\alpha} (\xa;\xd) + \Bj{\xi_\alpha^\delta}(\xad;\xa) - \frac{1}{\alpha}  \langle \rad  - \ra, \ra \rangle  \\
\end{split}
\end{equation*}
An application of the bounds in Lemma~\ref{lem:distance-bounds}
provides us with the estimate
\begin{align*}
 \Bj{\xi_\alpha^\delta}(\xad;\xd) &\leq \modi(\alpha) + \frac {\delta^{2}} {2\alpha}
-\frac 1 {\alpha} \norm{\ra}^{2} +\frac 1 \alpha \scalar{\rad}{\ra}-
  \frac{1}{\alpha}  \langle \rad  - \ra, \ra \rangle\\
&=  \modi(\alpha) + \frac {\delta^{2}} {2\alpha} ,
\end{align*}
and the proof is complete.
\end{proof}

\section{Some convex analysis for index functions}
\label{sec:convex-analysis}

We shall provide some additional details for convex index functions.
First, it is well known that for convex index function~$f$ we have
that~$0 < s\leq t$ yields~$f(s)/s\leq f(t)/t$. Indeed, we
let~$0 < \theta:= s/t \leq 1$ and obtain that
$$
f(s) = f(\theta t + (1-\theta) 0)\leq \theta f(t) + (1-\theta) f(0) =
\frac s t f(t),
$$
which allows us to prove the assertion. This implies that the limit~$g:= \lim_{t\to
  0}f(t)/t\geq 0$ exists. If $g>0$ then $f$ is linear near zero, and
this case is not interesting in this study. Otherwise, we assume
that~$g=0$. In this interesting (sub-linear) case the following result
is relevant.
\begin{lem}\label{lem:convex}
  Suppose that~$f$ is a convex index function.
The following assertions are equivalent.
  \begin{enumerate}
  \item\label{it:direct} The quotient~$f(t)/t,\ t>0$ is a strictly
    increasing  index function.
  \item\label{it:inverse} There is a strictly increasing index function~$\varphi$, and
    the companion~$\Theta(t) := \sqrt t \varphi(t),\ t>0$  such that
    the representation~$f(t) =
    \Theta^{2}\lr{\lr {\varphi^{2}}^{-1}(t)},\ t>0$ is valid.
  \end{enumerate}
\end{lem}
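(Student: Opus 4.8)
The plan is to prove Lemma~\ref{lem:convex} by establishing both implications between the two characterizations. The essential content is a change-of-variables identity connecting the quotient $f(t)/t$ with the companion $\Theta(t) = \sqrt{t}\,\varphi(t)$, and the whole argument rests on bookkeeping the monotonicity and limit conditions so that every constructed object is genuinely an index function.

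For the implication $\eqref{it:inverse}\Rightarrow\eqref{it:direct}$, I would start from the assumed representation $f(t) = \Theta^2\lr{\lr{\varphi^2}^{-1}(t)}$ and compute the quotient directly. Writing $s := \lr{\varphi^2}^{-1}(t)$, so that $t = \varphi^2(s)$, and using $\Theta^2(s) = s\,\varphi^2(s)$, I expect
\[
\frac{f(t)}{t} = \frac{\Theta^2(s)}{\varphi^2(s)} = \frac{s\,\varphi^2(s)}{\varphi^2(s)} = s = \lr{\varphi^2}^{-1}(t).
\]
Since $\varphi$ is a strictly increasing index function, so is $\varphi^2$, and hence its inverse $\lr{\varphi^2}^{-1}$ is again a strictly increasing index function; this is exactly the claim in \eqref{it:direct}. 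The only care needed here is to check that $\varphi^2$ maps $(0,\infty)$ onto a range containing the relevant $t$, so that $\lr{\varphi^2}^{-1}$ is well defined, and that the limit $\lr{\varphi^2}^{-1}(t)\to 0$ as $t\to 0$ holds, which follows from continuity of $\varphi$ at zero.

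For the converse $\eqref{it:direct}\Rightarrow\eqref{it:inverse}$, the natural move is to reverse the computation: given that $q(t) := f(t)/t$ is a strictly increasing index function, I would simply \emph{define} $\varphi$ by forcing the identity $\lr{\varphi^2}^{-1} = q$ to hold, i.e.\ set $\varphi^2 := q^{-1}$ and hence $\varphi := \sqrt{q^{-1}}$. Because $q$ is a strictly increasing index function its inverse $q^{-1}$ exists and is again a strictly increasing index function, so $\varphi = \sqrt{q^{-1}}$ is a strictly increasing index function as required. It then remains to verify that with this choice of $\varphi$, the companion $\Theta(t)=\sqrt{t}\,\varphi(t)$ yields back $f$ through $f(t)=\Theta^2\lr{\lr{\varphi^2}^{-1}(t)}$. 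Substituting $\lr{\varphi^2}^{-1} = q$ gives $\Theta^2(q(t)) = q(t)\,\varphi^2(q(t)) = q(t)\cdot q^{-1}(q(t)) = q(t)\cdot t = f(t)$, closing the loop.

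The step I expect to be the main obstacle is not any single algebraic identity but the careful verification, in both directions, that each constructed function is a bona fide index function in the sense of the paper's footnote — continuous, strictly increasing, and tending to zero at the origin — and in particular that the various inverses and compositions preserve these properties and are defined on the correct domains. The convexity of $f$ does not enter the algebra directly; rather, it guarantees (via the discussion preceding the lemma) that $q(t)=f(t)/t$ is monotone and that the sub-linear regime $g=\lim_{t\to 0} f(t)/t = 0$ is the one under consideration, which is what makes $q$ an index function tending to zero and hence legitimately invertible near the origin. I would therefore take some care to state explicitly where the standing hypothesis $g=0$ is used, since without it $q$ would not satisfy the limit condition and the representation would break down.
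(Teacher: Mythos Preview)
Your proposal is correct and follows essentially the same approach as the paper: for \eqref{it:inverse}$\Rightarrow$\eqref{it:direct} you compute $f(t)/t = (\varphi^2)^{-1}(t)$ via the substitution $t=\varphi^2(s)$, and for \eqref{it:direct}$\Rightarrow$\eqref{it:inverse} you define $\varphi^2 := q^{-1}$ with $q(t)=f(t)/t$, which is exactly the paper's implicit definition $\varphi(f(t)/t):=\sqrt t$. Your additional remarks on domain issues, the index-function properties of the constructed objects, and the role of the standing hypothesis $g=0$ are more explicit than the paper's terse version but do not alter the argument.
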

\begin{proof}
  Clearly, if~$f$ has a representation as in~(\ref{it:inverse}) then
  we find, with letting~$\varphi^{2}(s) = t$, that
$$
\frac{f(t)} t = \frac{\Theta^{2}(s)}{\varphi^{2}(s)} =  s\searrow 0,
$$
as~$s\to 0$.

For the other implication we observe that by assumption we can
(implicitly) define the strictly increasing index function~$\varphi$ by
\begin{equation}
  \label{eq:varphi-def}
  \varphi\lr{\frac{f(t)}t} \colon =  \sqrt t,\quad t>0.
\end{equation}
This yields that
$$
f(t) = t \lr{\varphi^{2}}^{-1}(t) =
\Theta^{2}\lr{{\lr{\varphi^{2}}^{-1}(t)}},\quad t>0,
$$
which completes the proof.
\end{proof}

As an interesting consequence we mention the following result for the Fenchel conjugate function~$f^\ast$ to the convex (index) function~$f$, which is defined as
\begin{equation}
    \label{eq:Fconjugate}
    f^\ast(t) := \sup_{s\geq 0}\lr{s t - f(s)}, \quad t>0.
\end{equation}
Clearly, both functions~$f$ and its conjugate~$f^\ast$ obey the \emph{Fenchel-Young Inequality}
\begin{equation}
    \label{eq:FYI}
    s t  \leq f(s) + f^\ast(t),\quad s,t\geq 0.
\end{equation}
\begin{cor}\label{cor:appendix}
  Suppose that~$f$ is a convex index function such that the
  quotient~$f(t)/t,\ t>0$ is a strictly increasing  index function.
  Then the Fenchel conjugate function~$f^{\ast}$ is an index function,
  and there is a strictly increasing index function~$\varphi$ such
  that
$$
\frac{f^{\ast}(t)}{t} \leq \varphi^{2}(t),\quad t>0.
$$
\end{cor}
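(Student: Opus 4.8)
The plan is to work directly with the quotient $h(t) := f(t)/t$, which by hypothesis is a strictly increasing index function. By the construction in the proof of Lemma~\ref{lem:convex} the associated strictly increasing index function $\varphi$ satisfies $\varphi\lr{h(t)} = \sqrt t$, equivalently $\varphi^{2} = h^{-1}$. With this identification the whole corollary reduces to the single estimate $f^{\ast}(t) \leq t\,h^{-1}(t)$ for all $t>0$, from which the claimed inequality and the limit condition follow at once, and the remaining index-function properties are easy to read off.

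First I would record the elementary features of the conjugate. Since $f$ is convex with $f(0)=0$ and $f\geq 0$, the conjugate $f^{\ast}(t) = \sup_{s\geq 0}\lr{st - f(s)}$ is convex and lower semicontinuous as a supremum of functions affine in~$t$; taking $s=0$ gives $f^{\ast}\geq 0$, and $f^{\ast}(0) = \sup_{s\geq 0}\lr{-f(s)} = 0$, attained at $s=0$. The heart of the matter is then the bound itself. Writing $f(s) = s\,h(s)$ I rewrite the defining supremum as
$$
f^{\ast}(t) = \sup_{s\geq 0} s\lr{t - h(s)}, \quad t>0.
$$
For $s > h^{-1}(t)$ strict monotonicity of $h$ gives $h(s) > t$, so $s\lr{t-h(s)}<0$ and such $s$ do not contribute to the supremum. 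For $0\leq s \leq h^{-1}(t)$ I estimate $s\lr{t - h(s)} \leq s\,t \leq h^{-1}(t)\,t$, using $h(s)\geq 0$ and $s\leq h^{-1}(t)$. Taking the supremum yields
$$
f^{\ast}(t) \leq t\,h^{-1}(t) = t\,\varphi^{2}(t),
$$
which is precisely the asserted inequality $f^{\ast}(t)/t \leq \varphi^{2}(t)$.

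It then remains to collect the index-function properties of $f^{\ast}$. The limit condition is immediate from the bound, since $f^{\ast}(t)\leq t\,\varphi^{2}(t) \to 0$ as $t\to 0$ because $\varphi$ is an index function; finiteness near zero together with convexity yields continuity there. Strict monotonicity follows by noting that for fixed $t>0$ the integrand $s\lr{t-h(s)}$ is positive for small $s$ (as $h(s)\to 0$), so the supremum is attained at some $s^{\ast}(t)>0$; hence for $t_{1}<t_{2}$ one has $f^{\ast}(t_{2})\geq s^{\ast}(t_{1})t_{2} - f(s^{\ast}(t_{1})) > f^{\ast}(t_{1})$. I expect the only genuinely delicate point to be the behaviour for large $t$: if $h$ is merely bounded then $f^{\ast}$ becomes infinite beyond $\sup h$, so that $f^{\ast}$ qualifies as an index function exactly on the interval where $h^{-1}=\varphi^{2}$ is defined; in the setting where this corollary is applied, e.g.\ item~\eqref{it:3} of Proposition~\ref{pro:peter} with $h(s)=s^{2}/\mod(s)\to\infty$, this interval is all of $(0,\infty)$.
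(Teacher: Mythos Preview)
Your argument is correct and, at its core, coincides with the paper's: both identify $\varphi^{2}=h^{-1}$ via Lemma~\ref{lem:convex} and bound $f^{\ast}(t)\leq t\,h^{-1}(t)=t\,\varphi^{2}(t)$ through the case split $s\leq h^{-1}(t)$ versus $s>h^{-1}(t)$. The paper packages this same split as a ``poor man's Young inequality'' $\varphi^{2}(x)\,y\leq \varphi^{2}(x)\,x+\varphi^{2}(y)\,y$, which after the substitution $s=\varphi^{2}(x)$, $t=y$ reads $st\leq f(s)+\Theta^{2}(t)$ and unwinds to exactly your two cases. Your presentation is the more direct of the two, and you verify the index-function properties of $f^{\ast}$ (strict monotonicity, continuity, the limit at zero) explicitly where the paper only asserts them. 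Your caveat about large $t$ is also well taken: the corollary tacitly needs $h(t)\to\infty$ for $f^{\ast}$ to be finite on all of $(0,\infty)$, and this is precisely the extra hypothesis imposed in the only place the corollary is invoked, namely item~\eqref{it:3} of Proposition~\ref{pro:peter}.
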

\begin{proof}
  First, by Lemma~\ref{lem:convex} there is a strictly increasing
  index function~$\varphi$ such that~$f(t) =
    \Theta^{2}\lr{\lr {\varphi^{2}}^{-1}(t)},\ t>0$.
Now we use the ``poor man's Young Inequality''  of the form
$$
\varphi^{2}(x) y \leq \varphi^{2}(x) x + \varphi^{2}(y) y,\quad x,y>0,
$$
which in turn, by letting~$s:= \varphi^{2}(x)$ and~$t:= y$, implies
$$
s t \leq \Theta^{2}\lr{\lr{\varphi^{2}}^{-1}(s)} + \Theta^{2}(t),\quad s,t>0.
$$
For the Fenchel conjugate~$f^{\ast}$ this yields
$$
f^{\ast}(t) := \sup_{s>0}\set{st - f(s)} \leq \Theta^{2}(t).
$$
From this bound we conclude that~$f^{\ast}$ will be an index function
for which the quotient~$f^{\ast}(t)/t$ has the desired bound.
\end{proof}
\section*{Acknowledgment}
We thank Peter Elbau (University of Vienna) and Jens Flemming (TU Chemnitz) for suggesting to us essential ingredients for
the proofs of Propositions~\ref{prop:dich} and~\ref{prop:nosuper}, respectively.\\
The research of the first author was supported by Deutsche
Forschungsgemeinschaft (DFG-grant HO 1454/12-1).
The research of the second author was supported
by the Austrian Science Fund (FWF)
project P~30157-N31.

\end{document}